\documentclass[11pt,a4paper]{scrartcl}

\usepackage[T1]{fontenc}
\usepackage[utf8]{inputenc}
\usepackage{lmodern}
\usepackage{amsmath,amssymb,amsthm,mathtools}
\usepackage{microtype}
\usepackage{enumitem}
\usepackage[colorlinks=true,linkcolor=blue,citecolor=blue,urlcolor=blue]{hyperref}
\usepackage{tikz}
\usetikzlibrary{arrows.meta,decorations.pathreplacing,positioning}

\newtheorem{theorem}{Theorem}[section]
\newtheorem{lemma}[theorem]{Lemma}
\newtheorem{corollary}[theorem]{Corollary}
\theoremstyle{remark}
\newtheorem{remark}[theorem]{Remark}

\newcommand{\R}{\mathbb R}
\newcommand{\Rp}{\mathbb R_+}
\newcommand{\Borel}{\mathcal B}
\newcommand{\Law}{\operatorname{Law}}
\newcommand{\supp}{\operatorname{supp}}
\newcommand{\diam}{\operatorname{diam}}

\title{Realising atomless laws as distance distributions on metric measure spaces}
\author{Christoph Th\"ale and Philipp Tuchel}
\date{}

\begin{document}
	\maketitle
	
	\begin{abstract}
		{\noindent\small We study which probability measures on $[0,\infty)$ can occur as the distribution of the distance between two independent points sampled from a complete separable metric space equipped with a Borel probability measure.  We prove that an atomless Borel probability measure can be realised in this way if and only if its support contains the origin. This settles the conjecture of Aldous, Blanc, and Curien for absolutely continuous laws and also covers atomless measures that are singular with respect to Lebesgue measure. Moreover, for every $L>1$, we show that the realising metric may be chosen $L$-bi-Lipschitz equivalent to an ultrametric. The proof constructs the space from compact components whose mutual distances produce prescribed parts of the measure, while distances within the components are assigned to smaller scales.}
	\end{abstract}
	
	\medskip
	\noindent
	{\small \textbf{MSC2020.}
	60B05, 60E05}\\
	\noindent
	{\small\textbf{Keywords.}
		Atomless probability measure, distance distribution, metric probability
		space, ultrametric}
	
	\section{Introduction and main results}
	
	Let $(S,d)$ be a complete separable metric space equipped with a Borel
	probability measure $\mu$.  If $X$ and $Y$ are independent random
	elements with common law $\mu$, then $d(X,Y)$ is a nonnegative random
	variable whose distribution is a natural invariant of the metric
	probability space $(S,d,\mu)$.  The \emph{distance problem} asks for the
	inverse characterisation: which probability measures on
	$\Rp=[0,\infty)$ arise in this way?
	
	The problem originates in a question posed by Aldous on MathOverflow
	concerning distance distributions in Hilbert spaces \cite{AldousMO2022}.
	Aldous, Blanc, and Curien \cite{AldousBlancCurien2025} subsequently considered it in the setting of
	measured metric spaces.  They showed that the
	condition that the support contain the origin does not suffice for
	arbitrary laws with atoms \cite[Proposition~2.2]{AldousBlancCurien2025}
	and asked whether it is sufficient for absolutely continuous laws
	\cite[Problem~1.7]{AldousBlancCurien2025}.
	The same question was stated by Gross in \cite[Problem~1.3]{Gross2025}.
	The support condition is in fact necessary. 
	Namely, if $d(X,Y)\ge a>0$ almost surely,
	then every open ball of radius $a/2$ has $\mu$-measure zero, since two
	points in such a ball have distance less than $a$.  A separable metric
	space is covered by countably many balls of that radius, which would give
	$\mu(S)=0$.  Thus the support of every distance distribution arising from a metric
	probability space must contain the origin.
	
	Previous results cover several special cases of the distance problem.
	Aldous, Blanc, and Curien proved that every finitely supported probability
	measure whose support contains the origin can be realised as a distance
	distribution \cite[Theorem~1.4]{AldousBlancCurien2025}.  They also showed
	that, for every continuous probability density $f$ which is positive near
	the origin and every $\varepsilon>0$, there is a continuous probability
	density $g$ satisfying $g\le(1+\varepsilon)f$ such that the measure
	$g(t)\,dt$ can be realised as a distance distribution
	\cite[Proposition~1.5]{AldousBlancCurien2025}.  Gross extended the
	discrete result to countably supported laws
	\cite[Theorem~1.1]{Gross2025} and proved that every probability measure
	on $[0,1]$ whose density is bounded above and below by positive constants
	can be realised as a distance distribution
	\cite[Theorem~1.2]{Gross2025}.
	
	To state our main result, we call a Borel probability measure
	\(\Theta\) on \(\Rp\) \emph{feasible} if there exist a complete
	separable metric space \((S,d)\), a Borel probability measure \(\mu\)
	on \(S\), and independent random elements \(X\) and \(Y\) with common
	law \(\mu\) such that \(d(X,Y)\) has distribution \(\Theta\).  We call
	\((S,d,\mu)\) a \emph{realisation} of \(\Theta\).  It is called a
	compact realisation if \((S,d)\) is compact.  The following
	theorem gives a complete answer for atomless laws.
	
	\begin{theorem}
		\label{thm:main}
		Let $\Theta$ be an atomless Borel probability measure on $\Rp$.  Then
		$\Theta$ is feasible if and only if $0\in\supp\Theta$.
	\end{theorem}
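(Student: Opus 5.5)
Necessity was already shown in the introduction: if $0\notin\supp\Theta$, some $a>0$ has $d(X,Y)\ge a$ almost surely, forcing $\mu(S)=0$. So the work is sufficiency. Given atomless $\Theta$ with $0\in\supp\Theta$, I would build an ultrametric-like space as a limit of a hierarchical (tree) construction. Distances between points in different components at a given level realise one prescribed piece of $\Theta$; distances inside components are pushed to smaller scales.

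\textbf{Step 1: dyadic-type decomposition of $\Theta$.} Since $0\in\supp\Theta$, choose scales $r_0>r_1>r_2>\dots\downarrow 0$ with $\Theta([0,r_k))>0$ for all $k$. Set $\Theta_k=\Theta|_{[r_{k+1},r_k)}$, and include the tail $[r_0,\infty)$ in the top level. Write $p_k=\Theta([0,r_k))$. At level $k$, the conditional law given that the two points lie in the same level-$k$ cell should be $\Theta|_{[0,r_k)}/p_k$. Because $\Theta$ is atomless, I can refine the $r_k$ so that the ratios $p_{k+1}/p_k$ are controlled. For instance, I can force $p_{k+1}/p_k\le 1/2$ or choose them freely within the available range; this is continuity of $t\mapsto\Theta([0,t))$.

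\textbf{Step 2: one level, i.e.\ a single splitting.} I need a lemma for the following task. Given a probability measure $\nu$ on $[0,r)$ with $\nu([0,s))=q>0$ and atomless part $\nu|_{[s,r)}$, realise it as follows. Take a family of cells with weights $w_i$ and $\sum w_i^2=q$. Draw same-cell pairs recursively at scales $<s$. Cross-cell pairs $(i,j)$ get distances $D$ whose aggregated law $\sum_{i\ne j}w_iw_j\,\mathrm{Law}(D_{ij})$ equals $\nu|_{[s,r)}$.

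Deterministic distances $D_{ij}$ only produce atoms. So each cell must itself be a continuum, e.g.\ $[0,1]$ with Lebesgue measure, or a Cantor-type set. The distance between $x\in$ cell $i$ and $y\in$ cell $j$ is then a function $F_{ij}(x,y)$ with values in $[s,r)$, whose pushforward of Lebesgue$\otimes$Lebesgue is a prescribed atomless law. Using quantile functions, choose $F_{ij}(x,y)=G(\phi(x,y))$, where $\phi$ is measure-preserving into $[0,1]$.

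I would take the cells as compact pieces indexed by the eventual tree, so each point carries its full address. For points $x,y$ in distinct top cells, define $F$ from their first distinguishing coordinates of a deeper label. The triangle inequality is safe for $L$-bi-Lipschitz-ultrametric reasons if all cross distances lie in $[s,r)$ with $r/s\le L$ and intra-cell distances are $<s$. Indeed, $d(x,z)\le r\le 2s\le d(x,y)+d(y,z)$ whenever some pair crosses. So I must subdivide each scale band further, making the ratios $r_k/r_{k+1}\le 2$ (or $\le L$). Since $\Theta$ may give zero mass to gaps, I would merge gap intervals, handling them by allowing empty bands.

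\textbf{Step 3: assembly and completeness.} Iterate Step 2 along a tree. The space is the set of infinite branches, with a product-type measure. Two independent points split at a random depth $k$ with probability $p_{k}-p_{k+1}$, after normalising. Conditioned on the split, their distance has law $\Theta_k/\Theta_k(\Rp)$ through the measure-preserving coupling. Points in the same branch up to depth $k$ have distance $<r_k\to 0$, so the space is a closed subset of a product of compact sets, hence compact and separable. Points separate because the diagonal has measure zero, so distance $0$ occurs with probability $\Theta(\{0\})=0$.

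\textbf{Main obstacle.} The hard step is Step 2: producing an \emph{arbitrary} atomless law on the band from cross pairs of two independent points, while keeping the within-cell structure consistent. The same coordinates are used both to realise cross distances and to determine deeper splits, and the choice must respect symmetry $F(x,y)=F(y,x)$. I would first try defining the cross distance via the first differing digit position $m$ of a binary expansion of an auxiliary independent uniform coordinate. I would allocate the band law across $m$ so that everything is given by measurable symmetric functions, then check independence of the auxiliary coordinates from the tree-splitting coordinates.
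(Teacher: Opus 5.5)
There is a genuine gap in Step~2: the triangle-inequality argument is false.

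The chain $d(x,z)\le r\le 2s\le d(x,y)+d(y,z)$ needs both $d(x,y)\ge s$ and $d(y,z)\ge s$. So it only covers triangles whose three vertices lie in distinct cells. If $x,y$ lie in one cell and $z$ in another, you need $|d(x,z)-d(y,z)|\le d(x,y)$. Here $d(x,y)<s$ can be arbitrarily small, while $d(x,z),d(y,z)\in[s,r)$ may differ by nearly $r-s$, and no choice of band ratios repairs this. Every map $x\mapsto d(x,z)$ must be $1$-Lipschitz on the cell of $x$ for that cell's \emph{internal} metric, down to all depths.

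A quantile composed with a generic measure-preserving $\phi(x,y)$ gives no such control. The remedy you sketch at the end makes things worse: there, cross distances are read off an auxiliary uniform coordinate independent of the tree-splitting coordinates. Two points agreeing to great depth in the tree are very close, yet their auxiliary coordinates, and hence their distances to a third point, are unrelated. The coordinate producing cross distances and the internal metric must be tightly coupled, not independent.

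Separately, Step~3 cannot be right when $\supp\Theta$ is unbounded. A compact space has bounded distances, and the top band $[r_0,\infty)$ already violates your ratio condition.

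Supplying this coupling is the core of the paper's argument. The distance from $x\in X_i$ to every point of a later component (an earlier one in the unbounded case) is $\phi_i(x)$, a $1$-Lipschitz function of that one point. Its law is therefore simply $(\phi_i)_\#\mu_i$. Lemma~\ref{lem:local} builds the internal metric on an interval so that it dominates $|x-y|$. It does this via nested partitions whose cells have small Euclidean diameter \emph{and} small image under every previously chosen transport map $\Phi_{A,B}$. This is what yields $|\Phi_{A,B}(x)-\Phi_{A,B}(z)|\le a_m\le d(x,z)$.

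The coupling also has a cost your Step~1 ignores. Let $X,Y$ be independent with a cell's law, and suppose $d(X,z)$ has density at most $D$. Since $d(\cdot,z)$ is $1$-Lipschitz, $\mathbb P(d(X,Y)<r)\le\mathbb P(|d(X,z)-d(Y,z)|<r)\le 2Dr$. Suppose the cross-distance laws out of every cell are as regular as the band of $\Theta$ they reproduce (e.g.\ of bounded density). Then same-cell pairs contribute only $O(r)$ to $[0,r)$, whereas $\Theta([0,r))$ can be of order $\sqrt r$. So exact matching of $\Theta|_{[0,s)}$ by same-cell pairs can fail.

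The paper therefore proceeds as follows:
\begin{itemize}
\item It only requires the internal laws of these components to be \emph{dominated} by part of $\Theta$ near $0$.
\item It hands the remainder $\zeta'$ to one extra component whose cross distances do not depend on its own points ($X_\ast$ in Lemma~\ref{lem:annulus-transfer}, $X_0$ in the unbounded case), and realises it recursively at smaller scales (Theorem~\ref{thm:compact}).
\item It fixes the resulting asymmetric weights with Lemma~\ref{lem:mass-selection}.
\item It handles unbounded support with ordered tail intervals tending to infinity (Lemma~\ref{lem:tail-partition}).
\end{itemize}
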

	
	Theorem~\ref{thm:main} also applies to atomless measures that are
	singular with respect to Lebesgue measure.  For absolutely continuous
	measures, it proves the conjecture in
	\cite[Problem~1.7]{AldousBlancCurien2025} without any regularity or
	boundedness assumption on the density.
	
	\begin{corollary}
		\label{cor:abc}
		Let $\Theta=f(t)\,dt$ be an absolutely continuous Borel probability
		measure on $\Rp$ with $0\in\supp\Theta$.  Then $\Theta$ is feasible.
	\end{corollary}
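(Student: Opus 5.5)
The corollary follows directly from Theorem~\ref{thm:main}, so the only task is to check its two hypotheses.

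First, $\Theta=f(t)\,dt$ is atomless. For every $t\in\Rp$ the singleton $\{t\}$ has Lebesgue measure zero. Hence $\Theta(\{t\})=\int_{\{t\}}f(s)\,ds=0$, because the integral of a nonnegative integrable function over a Lebesgue-null set vanishes.

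Second, the hypothesis $0\in\supp\Theta$ is assumed in the statement. It is also exactly the support condition required by Theorem~\ref{thm:main}.

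The ``if'' direction of Theorem~\ref{thm:main} therefore applies and shows that $\Theta$ is feasible. This argument needs no regularity, continuity or boundedness of $f$: it uses only integrability and the fact that points are Lebesgue-null. There is no real obstacle here, since all the difficulty lies in the construction behind Theorem~\ref{thm:main}.
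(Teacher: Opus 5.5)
Your proposal is correct and follows the paper's proof exactly: the paper also notes that an absolutely continuous measure is atomless and then applies Theorem~\ref{thm:main}. Your explicit check that $\Theta(\{t\})=\int_{\{t\}}f(s)\,ds=0$ is the one-line justification the paper leaves implicit.
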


	The theorem concerns only the distribution of the distance between two
	independently sampled points.  This is the simplest marginal of the
	random distance matrix associated with a metric probability space.  A different inverse question asks to what extent this marginal determines
	the underlying metric probability space within a prescribed class, see
	\cite{MemoliNeedham2022}. At
	the other end, the distribution of the infinite random distance matrix
	$(d(X_i,X_j))_{i,j\geq 1}$,
	where the random elements \(X_i\), \(i\geq 1\), are independent with law
	\(\mu\), determines the metric measure space, on the support of its
	measure, up to measure-preserving isometry by Gromov's reconstruction
	theorem \cite{Gromov1999}, see also
	\cite{Kondo2005,Vershik1998,Vershik2004}.  The laws of all finite sampled
	distance matrices determine the Gromov--weak topology
	\cite{GrevenPfaffelhuberWinter2009}.
	Between the two-point distribution considered here and the full random
	distance matrix lies, in particular, the joint distribution $\bigl(d(X_1,X_2),d(X_1,X_3),d(X_2,X_3)\bigr)$.
	The paper \cite{AldousBlancCurien2025} identifies the characterisation of such
	three-point laws as a natural next problem.  At this level, the triangle inequality
	already imposes nontrivial restrictions.

	There is a simple model which helps to explain the proof and also appears
	in \cite[Section~1.2, Example~(6)]{Gross2025}.  For $t\geq 0$ let
	$F(t)=\Theta([0,t])$ and let $\lambda$ be the probability measure on
	$\Rp$ defined by $\lambda([0,t])=\sqrt{F(t)}$.  Since $\Theta$ is
	atomless, so is $\lambda$.  Define
	\[
	d_0(x,y)=
	\begin{cases}
		0, & x=y,\\
		\max\{x,y\}, & x\ne y.
	\end{cases}
	\]
	This is an ultrametric on $\Rp$.  If $X$ and $Y$ are independent with
	law $\lambda$, then
	\[
	\mathbb P(d_0(X,Y)\le t)=\lambda([0,t])^2=F(t),
	\]
	because the diagonal has $(\lambda\otimes\lambda)$-measure zero.  Thus
	$d_0(X,Y)$ has the prescribed law $\Theta$.
	
	This construction does not give a metric probability space in the sense
	used here.  Every $x>0$ is isolated in the $d_0$-topology, so
	$(\Rp,d_0)$ is not separable.  Moreover, every subset of $(0,\infty)$ is
	$d_0$-open.  Hence the Borel $\sigma$-field generated by $d_0$ strictly
	contains the usual Borel $\sigma$-field on $\Rp$, on which $\lambda$ was
	defined. A separable ultrametric space cannot produce an atomless distance law,
	since every such space has at most countably many positive distance
	values according to \cite[Theorem~4.12]{DovgosheyShcherbak2022}.
	
	The proof replaces the isolated points of the
	above model by compact metric spaces.  Distances between different
	components reproduce prescribed parts of $\Theta$, while the internal
	distances created by each component are placed at smaller scales and
	subtracted from the mass available there.  This is related to the
	tree-based construction in \cite[Theorem 1.4 and Sections 3-4]{AldousBlancCurien2025}.  There, finite
	discrete laws are realised by weighted trees, and scaled compact spaces
	are grafted onto their leaves to replace discrete distance values by
	measures on short intervals.  The present construction uses the same
	distinction between distances within and between components, but
	controls the internal distance laws through the mass of the target
	measure nearer the origin.  Iterating this procedure yields a
	realisation when $\Theta$ has bounded support.
	
	The construction for countably supported laws in \cite[Section 2.1]{Gross2025} is
	also hierarchical.  The underlying space consists of sequences, and
	the distance between two sequences is determined by the last coordinate
	at which they differ.  In the local construction below, the first level
	at which two points belong to different partition cells similarly
	determines the scale of their distance, while transport maps determine
	its value within that scale.  The result for densities in
	\cite[Section 3]{Gross2025} follows instead by composing a given metric with a
	subadditive metric-preserving function.  Here the metric is constructed
	directly, and the target measure need not have a density.  For an
	unbounded law, the part outside a bounded interval is divided into an
	ordered sequence of measures supported on short intervals and then
	combined with a compact realisation of the remaining part.
	
	The construction also gives information about the geometry of the
	realising space $(S,d)$.  If the target measure has bounded support, the
	measure may be chosen to have full support on a space homeomorphic to
	the Cantor space $\{0,1\}^\mathbb{N}$.  For an unbounded target measure, the realising space
	may be chosen proper and homeomorphic to a countable disjoint union of
	Cantor spaces.  More quantitatively, for every $L>1$ the metric may be
	chosen together with an ultrametric $u$ such that
	\[
	u(x,y)\le d(x,y)\le L u(x,y),
	\qquad x,y\in S.
	\]
	These consequences and some related questions are discussed in
	Section~\ref{sec:geometry}.
	
	\medspace
	
	The paper is organised as follows.  Section~\ref{sec:Notation-Aux-Results}
	collects the notation and auxiliary results used later.
	Section~\ref{sec:local-construction} contains the local construction.
	The compactly supported case is proved in
	Section~\ref{sec:compact-case}, and the general case in
	Section~\ref{sec:general-case}.  Further properties of the realising
	spaces and some related questions are considered in
	Section~\ref{sec:geometry}.
	
	\section{Notation and auxiliary results}\label{sec:Notation-Aux-Results}
	
	\subsection{Notation}
	
	We write $\Rp=[0,\infty)$ and take $\mathbb N=\{1,2,\ldots\}$.
	For a family of sets $(X_i)_{i\in I}$, we write
	$\bigsqcup_{i\in I}X_i$ for their disjoint union, whose elements are the
	pairs $(x,i)$ with $i\in I$ and $x\in X_i$. Let
	$(X,d)$ be a metric space.  For $x\in X$ and $r>0$, we write $B_d(x,r)=\{y\in X:d(x,y)<r\}$.
	When the metric is clear from the context, we just write $B(x,r)$. 
	For $\varepsilon>0$, a finite set $F\subseteq X$ is called an
	$\varepsilon$-net of $(X,d)$ if $X\subseteq\bigcup_{x\in F}B_d(x,\varepsilon)$.
	The metric space $(X,d)$ is called totally bounded if it has a finite
	$\varepsilon$-net for every $\varepsilon>0$.
	For a
	nonempty set $A\subseteq X$, its diameter is
	$\diam_d A=\sup\{d(x,y):x,y\in A\}$.  Again, we write $\diam A$ when the metric
	is understood.  For subsets of $\R$, the diameter is taken with respect
	to the Euclidean metric unless another metric is specified.  If $T$ is
	a topological space, its Borel $\sigma$-field is denoted by $\Borel(T)$. For a subset $A$ of a topological space $T$, we denote its closure in $T$ by $\overline A$.
	
	Let $(E,\mathcal E)$ be a measurable space, let $\nu$ be a finite
	measure on $(E,\mathcal E)$, and let $A\in\mathcal E$.  The restriction
	of $\nu$ to $A$ is the finite measure $\nu|_A$ on $(E,\mathcal E)$
	defined by $\nu|_A(B)=\nu(A\cap B)$, $B\in\mathcal E$.
	If $\nu(A)>0$, the corresponding normalised restriction is denoted by
	$\nu|_A/\nu(A)$.  If $\alpha$ and $\beta$ are finite measures on the
	same measurable space $(E,\mathcal E)$, then $\alpha\le\beta$ means that
	$\alpha(B)\le\beta(B)$ for every $B\in\mathcal E$.  In this case,
	$\beta-\alpha$ denotes the finite positive measure given by
	$(\beta-\alpha)(B)=\beta(B)-\alpha(B)$ for $B\in\mathcal E$. 
	
	A set $A\in\mathcal E$ is an atom of a finite measure $\nu$ on
	$(E,\mathcal E)$ if $\nu(A)>0$ and, for every $B\in\mathcal E$ with
	$B\subseteq A$, either $\nu(B)=0$ or $\nu(A\setminus B)=0$.  The measure
	$\nu$ is called atomless if it has no atoms.  For a finite Borel measure
	on $\Rp$, this is equivalent to
	$\nu(\{t\})=0$ for every $t\in\Rp$.
	
	Let $T$ be a topological space and let $\nu$ be a Borel measure on $T$.
	Its support is
	\[
	\supp\nu
	=
	\{x\in T:\nu(U)>0\text{ for every open neighbourhood }U\text{ of }x\}.
	\]
	This is a closed subset of $T$.  For a Borel measure $\nu$ on $\Rp$, the
	condition $0\in\supp\nu$ is equivalent to
	$\nu([0,\varepsilon])>0$ for every $\varepsilon>0$.
	
	Let $(E,\mathcal E)$ and $(F,\mathcal F)$ be measurable spaces, let
	$\mu$ be a measure on $(E,\mathcal E)$, and let
	$\phi:E\to F$ be measurable.  The pushforward $\phi_\#\mu$ is the
	measure on $(F,\mathcal F)$ defined by $(\phi_\#\mu)(B)=\mu(\phi^{-1}(B))$, $B\in\mathcal F$.
	If $Z$ is an $E$-valued random variable on a probability space
	$(\Omega,\mathcal A,\mathbb P)$, then
	$\Law(Z)=Z_\#\mathbb P$.  For a probability measure $\nu$ on
	$(E,\mathcal E)$, the notation $Z\sim\nu$ means that
	$\Law(Z)=\nu$. For a map $\phi$, we denote its domain by $\operatorname{dom}\phi$.
	
	Let $X$ be a Borel subset of $\R$, equipped with the Borel structure
	inherited from $\R$.  A metric $d$ on $X$ is called Borel if the map $d:(X\times X,\Borel(X)\otimes\Borel(X))
	\to(\Rp,\Borel(\Rp))$
	is measurable.  Throughout, a metric probability space is a triple
	$(S,d,\mu)$ such that $(S,d)$ is a complete separable metric space and
	$\mu$ is a probability measure on the Borel $\sigma$-field generated
	by the metric topology of $(S,d)$.  Sums over empty index sets are
	understood to be zero.
	
	We use standard measure-theoretic terminology as in
	\cite{Fremlin2001}. In particular, whenever a finite Borel measure on $\Rp$ is used as a measure on $\R$,
	it is understood to be extended by zero to $(-\infty,0)$.
	
	\subsection{Atomless partitions}
	
	Let $\nu$ be a finite Borel measure on $\R$.  A Borel set
	$\Omega\subseteq\R$ has full $\nu$-measure if
	$\nu(\R\setminus\Omega)=0$.  A finite Borel partition of $\Omega$ is a
	finite family of pairwise disjoint Borel sets whose union is $\Omega$.
	The members of such a partition are called its cells.  A Borel
	partition $\mathcal P$ refines a Borel partition $\mathcal R$ if every
	cell of $\mathcal P$ is contained in a cell of $\mathcal R$.
	
	We shall repeatedly use the following consequences of the divisibility
	of atomless finite measures, see, for example,
	\cite[Proposition~215D]{Fremlin2001}.  We include the proof because the
	partitions must consist of Borel sets and because the quantitative
	estimate in part~\textnormal{(ii)} will be needed below.
	
	\begin{lemma}
		\label{lem:division}
		Let $\nu$ be an atomless finite Borel measure on $\R$.
		\begin{enumerate}[label=\textnormal{(\roman*)}]
			\item If $E\in\Borel(\R)$ and $0\le a\le\nu(E)$, there is a Borel set
			$E_a\subseteq E$ with $\nu(E_a)=a$.
			\item If $\mathcal R$ is a finite Borel partition of a full-$\nu$-measure
			Borel set into cells of positive measure and $\delta>0$, then there is a
			finite Borel partition $\mathcal P$ of a full-$\nu$-measure Borel set
			such that every cell of $\mathcal P$ has positive measure, every cell of
			$\mathcal P$ is contained in a cell of $\mathcal R$, and
			$\max_{C\in\mathcal P}\nu(C)\le\delta$.  If $\nu$ is a probability
			measure, then
			$\sum_{C\in\mathcal P}\nu(C)^2\le\delta$.
			\item Every finite Borel measure $\lambda$ satisfying $0\le\lambda\le\nu$
			is atomless.
		\end{enumerate}
	\end{lemma}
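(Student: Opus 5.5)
For part~(i) we use the distribution function of $\nu|_E$. Set $G(t)=\nu(E\cap(-\infty,t])$ for $t\in\R$. Since $\nu$ is atomless, $G$ is continuous. It is also nondecreasing, with $G(t)\to0$ as $t\to-\infty$ and $G(t)\to\nu(E)$ as $t\to\infty$. By the intermediate value theorem, for $0<a<\nu(E)$ there is $t$ with $G(t)=a$, and $E_a=E\cap(-\infty,t]$ is Borel with $\nu(E_a)=a$. The endpoint cases are handled directly: $E_0=\emptyset$ and $E_{\nu(E)}=E$. This avoids any abstract exhaustion argument and gives Borel sets automatically.

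For part~(iii), let $\lambda\le\nu$. For every $t$ we have $\lambda(\{t\})\le\nu(\{t\})=0$. By the stated equivalence for finite Borel measures on the line, $\lambda$ is therefore atomless. If one prefers the abstract definition, suppose $A$ is an atom of $\lambda$. Then $\nu(A)\ge\lambda(A)>0$. Apply the continuity argument of~(i) to $\lambda|_A$, which is atomless because its points have measure zero. This produces $B\subseteq A$ with $\lambda(B)=\lambda(A)/2$, contradicting that $A$ is an atom.

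For part~(ii), take a cell $R\in\mathcal R$ with $\nu(R)>0$ and choose $k\in\mathbb N$ with $\nu(R)/k\le\delta$. The function $G_R(t)=\nu(R\cap(-\infty,t])$ is continuous and increases from $0$ to $\nu(R)$. For $j=1,\dots,k-1$ pick thresholds $t_j$ with $G_R(t_j)=j\nu(R)/k$, taking them nondecreasing; set $t_0=-\infty$ and $t_k=+\infty$. Define the cells $R\cap(t_{j-1},t_j]$, using $(t_{k-1},\infty)$ for the last one. Each cell has measure exactly $\nu(R)/k\in(0,\delta]$, so all cells have positive measure. These cells partition $R$, so collecting them over all $R\in\mathcal R$ gives a partition $\mathcal P$ of the same full-measure set as $\mathcal R$. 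Every cell of $\mathcal P$ is Borel and lies inside a cell of $\mathcal R$. Finally, if $\nu$ is a probability measure, then
\[
\sum_{C\in\mathcal P}\nu(C)^2\le\max_{C\in\mathcal P}\nu(C)\sum_{C\in\mathcal P}\nu(C)\le\delta\cdot1=\delta.
\]

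The only point needing care is where the thresholds are not unique: $G_R$ may be constant on intervals. Positivity of the cells is not affected by this, because each cell's measure is exactly $\nu(R)/k>0$ no matter which admissible threshold is chosen. Nothing else in the argument is difficult; it rests entirely on continuity of the distribution functions, which is where atomlessness enters.
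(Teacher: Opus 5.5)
Your proposal is correct and follows the paper's proof essentially step by step: the continuous distribution function $t\mapsto\nu(E\cap(-\infty,t])$ with the intermediate value theorem for (i); splitting each cell of $\mathcal R$ into finitely many pieces of equal mass at most $\delta$, together with the bound $\sum\nu(C)^2\le\max\nu(C)\cdot\sum\nu(C)$, for (ii); and domination of point masses plus the same continuity argument for (iii). The only difference is that in (ii) you pick thresholds of $G_R$ directly instead of applying (i) successively. These thresholds are in fact automatically strictly increasing, because $G_R$ is nondecreasing and the targets $j\nu(R)/k$ are strictly increasing, so the point you flag as needing care is harmless.
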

	\begin{proof}
		For a Borel set $E$, the function
		$G_E(t)=\nu(E\cap(-\infty,t])$ is continuous because $\nu$ has no
		atoms.  Its limits at $-\infty$ and
		$+\infty$ are $0$ and $\nu(E)$, respectively.  For $a=0$ take
		$E_a=\varnothing$, and for $a=\nu(E)$ take $E_a=E$.  If
		$0<a<\nu(E)$, the intermediate value theorem gives $t\in\R$ such that
		$G_E(t)=a$. Then $E_a=E\cap(-\infty,t]$ has the required mass.  This
		proves (i).
		
		For (ii), divide every cell $C\in\mathcal R$ into
		$k_C=\lceil\nu(C)/\delta\rceil$ Borel pieces of equal mass, using (i)
		successively.  
		The resulting family
		is a finite Borel refinement and every cell has mass at most $\delta$.
		If $\nu$ is a probability measure,
		\[
		\sum_{C\in\mathcal P}\nu(C)^2
		\le
		\Bigl(\max_{C\in\mathcal P}\nu(C)\Bigr)
		\sum_{C\in\mathcal P}\nu(C)
		\le\delta.
		\]
Finally, suppose that $0\le\lambda\le\nu$.  Then
$\lambda(\{x\})=0$ for every $x\in\R$.  For every Borel set
$E\subseteq\R$, the function
$t\mapsto\lambda(E\cap(-\infty,t])$ is therefore continuous.  If
$\lambda(E)>0$, the intermediate value theorem divides $E$ into two
Borel sets of strictly positive $\lambda$-measure.  Hence $E$ is not
an atom, and $\lambda$ is atomless.
	\end{proof}
	
	\subsection{Gluing metric spaces}
	
	The proof repeatedly combines a finite ordered family of metric spaces.
	Their internal metrics are retained, while the distance between a point
	in one component and a point in a later component is prescribed by a
	$1$-Lipschitz function on the earlier component.  The following lemma
	gives conditions under which this construction defines a metric.
	
	\begin{lemma}
		\label{lem:annular-amalgamation}
		Let $R>0$ and $N\in\mathbb N$, and let
		$(X_i,d_i)$, $i=1,\ldots,N$, be metric spaces
		with \(\diam X_i\le R\).  We order the components by their indices and call $X_i$ earlier than
		$X_j$ if $i<j$. For \(i=1,\ldots,N-1\), let
		\(\phi_i:X_i\to[R,2R]\) be \(1\)-Lipschitz.  On the disjoint union
		\(X=\bigsqcup_{i=1}^N X_i\), retain \(d_i\) on each \(X_i\) and, for
		\(1\le i<j\le N\), set
		\[
		d(x,y)=d(y,x)=\phi_i(x),
		\qquad x\in X_i,\quad y\in X_j.
		\]
		Then \(d\) is a metric on \(X\).
	\end{lemma}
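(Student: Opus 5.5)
We have to check the metric axioms for $d$. Symmetry holds by definition. Positivity holds because each $d_i$ is a metric and cross distances satisfy $\phi_i\ge R>0$. The real content is the triangle inequality $d(x,z)\le d(x,y)+d(y,z)$ for $x\in X_i$, $y\in X_j$, $z\in X_k$. Two facts drive every case:
\begin{itemize}
\item every cross distance lies in $[R,2R]$, while every internal distance is at most $\diam X_i\le R$;
\item the cross distance between $X_a$ and $X_b$ with $a<b$ depends only on the point in the earlier component $X_{\min(a,b)}$, through $\phi_{\min(a,b)}$.
\end{itemize}
I will split into cases according to how many of the indices $i,j,k$ coincide. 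Symmetry lets me reduce the number of cases.

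\emph{All three points in one component.} This is the triangle inequality for $d_i$.

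\emph{Exactly two points in one component.} There are three subcases.
\begin{itemize}
\item The two outer points $x,z$ share a component and $y$ lies elsewhere. Then $d(x,z)\le R\le d(x,y)$, so the inequality holds trivially.
\item The points $x,y$ share a component $X_i$ and $z\in X_k$ with $k\ne i$.
\begin{itemize}
\item If $i<k$, then $d(x,z)=\phi_i(x)$ and $d(y,z)=\phi_i(y)$. The $1$-Lipschitz property gives $\phi_i(x)\le\phi_i(y)+d_i(x,y)$, which is exactly the required inequality.
\item If $i>k$, then $d(x,z)=\phi_k(z)=d(y,z)$, so the inequality holds trivially.
\end{itemize}
\item The points $y,z$ share a component. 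This is symmetric to the previous subcase.
\end{itemize}

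\emph{Three distinct components.} All three distances are cross distances, so each lies in $[R,2R]$. Hence
\[
d(x,z)\le 2R\le d(x,y)+d(y,z).
\]

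The only step with any substance is the Lipschitz case, where the ordering convention is essential. That convention ensures that when $x$ and $y$ share a component, both of their distances to $z$ are read off from the same function on that common component whenever it is the earlier one. Otherwise the two distances coincide outright. Nothing else beyond the diameter bound is needed, so I expect no real obstacle.
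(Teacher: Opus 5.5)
Your proof is correct and follows essentially the same argument as the paper: the same case split, using the $1$-Lipschitz bound when the shared component is the earlier one, equality of the cross distances when it is the later one, and internal distances $\le R$ against cross distances $\ge R$. In the three-component case you only use that every cross distance lies in $[R,2R]$, which settles all three inequalities at once, whereas the paper also uses that the two distances from the earliest point are equal; the difference is immaterial.
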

	\begin{proof}
		Only the triangle inequality needs to be proved.  Suppose first that
		$x,x'\in X_i$.  If $y\in X_j$ for some $j>i$, then
		\[
		|d(x,y)-d(x',y)|
		=
		|\phi_i(x)-\phi_i(x')|
		\le d_i(x,x'),
		\]
		while
		\[
		d_i(x,x')
		\le R
		\le d(x,y)+d(x',y).
		\]
		These inequalities give the three triangle inequalities for
		$x,x',y$.  If $y\in X_j$ for some $j<i$, then
		$d(x,y)=d(x',y)=\phi_j(y)\ge R$.  Hence
		\[
		|d(x,y)-d(x',y)|=0\le d_i(x,x')
		\]
		and
		\[
		d_i(x,x')\le R\le d(x,y)+d(x',y).
		\]
		
		If the three points all lie in distinct components, let $x$ belong to
		the earliest component and denote the other two points by $y$ and $z$.
		Then $d(x,y)=d(x,z)\ge R$, while $d(y,z)\le2R$.  Hence
		\[
		d(y,z)\le2R\le d(y,x)+d(x,z).
		\]
		The other two triangle inequalities follow from
		$d(x,y)=d(x,z)$.
	\end{proof}
	
The preceding lemma constructs a metric on an ordered union of
components.  When the components carry probability measures and are
assigned weights, the resulting distance law can be computed by
separating pairs of points from the same component and from different
components.  The next lemma records this computation for the two
orders used below.  In part~\textnormal{(i)}, the distance is determined
by the point in the earlier component.  In part~\textnormal{(ii)}, it
is determined by the point in the later component. In what follows, measures on individual components are identified with their
extensions by zero to the full disjoint union.
	
	\begin{lemma}
		\label{lem:distance-identity}
		Let \(I=\{1,\ldots,N\}\) for some \(N\in\mathbb N\), or let
		\(I=\mathbb N\).  Let \((X_i,d_i,\mu_i)\), \(i\in I\), be an ordered
		family of pairwise disjoint metric probability spaces, and write
		\(\nu_i=\Law(d_i(U_i,V_i))\), where \(U_i,V_i\) are independent with
		law \(\mu_i\).  In each of the following cases, let \(d\) be a metric on the indicated
		disjoint union whose restriction to each component is its given metric,
		and assume that all components are Borel subsets of the resulting metric
		space.
		\begin{enumerate}[label=\textnormal{(\roman*)}]
			\item The ordered family may be augmented by one additional component
			$(X_\ast,d_\ast,\mu_\ast)$.  This single component is placed after all
			components $X_i$, $i\in I$. Let $p_i>0$, $i\in I$.  If $X_\ast$ is present, let
			$p_\ast>0$, assume that $p_\ast+\sum_{i\in I}p_i=1$,
			and put
			\[
			\mu=\sum_{i\in I}p_i\mu_i+p_\ast\mu_\ast,
			\qquad
			\nu_\ast=\Law(d_\ast(U_\ast,V_\ast)),
			\]
			where $U_\ast$ and $V_\ast$ are independent with law $\mu_\ast$.
			If $X_\ast$ is absent, assume that
			$\sum_{i\in I}p_i=1$, put
			$\mu=\sum_{i\in I}p_i\mu_i$, set $p_\ast=0$, and let
			$\nu_\ast$ be the zero measure on $\Rp$.
			
			Let $\phi_i:X_i\to\Rp$, $i\in I$, be measurable.  Suppose that
			$d(x,y)=\phi_i(x)$ whenever $x\in X_i$ and $y$ belongs to a later
			component, and put $\eta_i=(\phi_i)_\#\mu_i$.  If $U$ and $V$ are
			independent with law $\mu$, then
			\[
			\Law(d(U,V))
			=
			p_\ast^2\nu_\ast
			+\sum_{i\in I}p_i^2\nu_i
			+2\sum_{i\in I}p_i\Big(
			p_\ast+\sum_{\substack{j\in I\\j>i}}p_j
			\Big)\eta_i.
			\]
			
			\item Let $(X_0,d_0,\mu_0)$ be an additional initial component, which
			is placed before $X_i$ for every $i\in I$.  Let
			$p_i>0$, $i\in I\cup\{0\}$, and assume that $p_0+\sum_{i\in I}p_i=1$.
			Put
			\[
			\mu=p_0\mu_0+\sum_{i\in I}p_i\mu_i,
			\qquad
			\nu_0=\Law(d_0(U_0,V_0)),
			\]
			where $U_0$ and $V_0$ are independent with law $\mu_0$.
			
			Let $\phi_i:X_i\to\Rp$, $i\in I$, be measurable and suppose that
			$d(x,y)=\phi_i(x)$ whenever $x\in X_i$ and $y$ belongs to an earlier
			component.  If $U$ and $V$ are independent with law $\mu$, then
			\[
			\Law(d(U,V))
			=
			p_0^2\nu_0
			+\sum_{i\in I}p_i^2\nu_i
			+2\sum_{i\in I}p_i\Big(
			p_0+\sum_{\substack{j\in I\\j<i}}p_j
			\Big)(\phi_i)_\#\mu_i.
			\]
		\end{enumerate}
	\end{lemma}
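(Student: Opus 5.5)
The plan is to decompose the product measure $\mu\otimes\mu$ according to which components the two points lie in, and to compute the law of $d(U,V)$ on each piece separately. I treat part~(i) first; part~(ii) is the mirror image. Write $J=I$ if $X_\ast$ is absent and $J=I\cup\{\ast\}$ otherwise, ordered with $\ast$ last. Each component is a Borel subset of $X$, so the sets $X_a\times X_b$, $a,b\in J$, form a countable Borel partition of $X\times X$. Since $d$ is continuous, it is Borel on $X\times X$. For a Borel set $B\subseteq\Rp$, countable additivity gives
\[
\mathbb P(d(U,V)\in B)=\sum_{a,b\in J}p_ap_b\,(\mu_a\otimes\mu_b)\bigl(\{(x,y)\in X_a\times X_b:d(x,y)\in B\}\bigr).
\]
One technical point must be checked here. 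The Borel $\sigma$-field of $X_a$ for the metric $d_a$ should coincide with the trace of $\Borel(X)$ on $X_a$, so that each $\mu_a$ is identified with a Borel measure on $X$. This holds because $d$ restricts to $d_a$, so the subspace topology on $X_a$ is the $d_a$-topology, and $X_a$ is Borel in $X$.

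\emph{Diagonal terms} ($a=b$). Since $d=d_a$ on $X_a\times X_a$, the term is $p_a^2\,\mathbb P(d_a(U_a,V_a)\in B)=p_a^2\nu_a(B)$. This produces $\sum_{i\in I}p_i^2\nu_i$ and, when $X_\ast$ is present, $p_\ast^2\nu_\ast$.

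\emph{Off-diagonal terms} ($a\ne b$ with $a=i\in I$). Suppose $b$ is later than $i$, that is, $b\in I$ with $b>i$, or $b=\ast$. Then $d(x,y)=\phi_i(x)$, so the set in question is $\phi_i^{-1}(B)\times X_b$. By Fubini it has mass $\mu_i(\phi_i^{-1}(B))\mu_b(X_b)=\eta_i(B)$. By symmetry of $d$, the pair $(b,i)$ contributes the same amount. Every unordered pair of distinct indices has a unique earlier member, and that member always lies in $I$ because $\ast$ is last. Summing over $i\in I$ and the later $b$ therefore gives
\[
2\sum_{i\in I}p_i\Bigl(p_\ast+\sum_{j\in I,\,j>i}p_j\Bigr)\eta_i,
\]
with $p_\ast=0$ in the absent case. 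Adding the diagonal and off-diagonal contributions gives the formula in~(i).

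For part~(ii) the argument is the same, with index set $\{0\}\cup I$ and $0$ placed first. For a pair of distinct indices, the distance is determined by the later member $i\in I$ via $\phi_i$. Counting, for each $i$, the earlier indices $0$ and $j<i$ gives the weight $p_i(p_0+\sum_{j<i}p_j)$.

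The main obstacle is not the computation but justifying it when $I=\mathbb N$. This requires the countable decomposition and the interchange of sum and probability, which follow from countable additivity and nonnegativity. It also requires that each $\phi_i$ be measurable with respect to the Borel structure actually used for $\mu_i$. That is the reason for the preliminary check above that the Borel structures agree.
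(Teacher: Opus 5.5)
Your proposal is correct and follows essentially the paper's argument: decompose $\mu\otimes\mu$ into the diagonal blocks $X_a\times X_a$, which contribute $p_a^2\nu_a$, and the off-diagonal pairs, on which the distance is $\phi$ evaluated at the earlier point (part~(i)) or the later point (part~(ii)), giving the factor $2p_i$ times the mass of the relevant other components. The only differences are that the paper groups the off-diagonal rectangles into the sets $A_i=(X_i\times L_i)\cup(L_i\times X_i)$ rather than summing over individual pairs $X_i\times X_b$, and that it leaves implicit (by its extension-by-zero convention) the identification of Borel structures that you check explicitly.
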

	
	\begin{proof}
			The distance law is the pushforward
			$d_\#(\mu\otimes\mu)$.  For every $i\in I$, the restriction of
			$\mu\otimes\mu$ to $X_i\times X_i$ has pushforward
			$p_i^2\nu_i$.  The terminal component in part~\textnormal{(i)} and
			the initial component in part~\textnormal{(ii)} contribute
			$p_\ast^2\nu_\ast$ and $p_0^2\nu_0$, respectively.
			
			Consider part~\textnormal{(i)}.  For $i\in I$, let $L_i$ be the
			union of all components placed after $X_i$ and put $A_i=(X_i\times L_i)\cup(L_i\times X_i)$.
			The sets $A_i$, $i\in I$, are pairwise disjoint and cover all pairs
			of points belonging to distinct components.  Since the distance on
			$A_i$ is determined by $\phi_i$, we have
			\[
			d_\#\bigl((\mu\otimes\mu)|_{A_i}\bigr)
			=
			2p_i\mu(L_i)(\phi_i)_\#\mu_i
			=
			2p_i\Big(
			p_\ast+\sum_{\substack{j\in I\\j>i}}p_j
			\Big)\eta_i.
			\]
			Summing these identities over $i\in I$ proves
			part~\textnormal{(i)}.
			
			For part~\textnormal{(ii)}, the same argument applies with
			\[
			L_i=X_0\sqcup
			\bigsqcup_{\substack{j\in I\\j<i}}X_j,
			\qquad i\in I.
			\]
			In this case,
			$\mu(L_i)=p_0+\sum_{\substack{j\in I\\j<i}}p_j$, which gives the
			stated formula.
	\end{proof}
	
	\subsection{Choosing the component weights}
	
	In the applications below, part of the target measure is written as
	$\sum_{i\in I}c_i\eta_i$, where each $\eta_i$ is a probability measure.
	The number $p_i$ is the mass assigned to the component realising
	$\eta_i$, and $p_0$ is the mass of the initial component $X_0$.  By
	Lemma~\ref{lem:distance-identity}(ii), the required coefficient identity
	is
	\[
	c_i=2p_i\Big(
	p_0+\sum_{\substack{j\in I\\j<i}}p_j
	\Big).
	\]
	The next lemma chooses such weights and controls
	$\sum_{i\in I}p_i^2$.  For a finite family, reversing the order gives
	the coefficients in Lemma~\ref{lem:distance-identity}(i), with $p_0$
	denoted by $p_\ast$.
	
	\begin{lemma}
		\label{lem:mass-selection}
		Let $I=\{1,\ldots,N\}$ for some $N\in\mathbb N$, or let
		$I=\mathbb N$.  Let $\alpha\in(0,1)$ and let
		$(c_i)_{i\in I}$ be a family of positive numbers satisfying $\sum_{i\in I}c_i=1-\alpha$.
		There is a constant $\gamma_0(\alpha)>0$ such that, if
		$\sup_{i\in I}c_i\le\gamma_0(\alpha)$, then there exist $p_0>0$
		and a family $(p_i)_{i\in I}$ of positive numbers satisfying
		\[
		\begin{alignedat}{2}
			&p_0+\sum_{i\in I}p_i
			=1,
			&\qquad
			&p_0^2+\sum_{i\in I}p_i^2
			=\alpha,
			\\
			&c_i
			=2p_i\Big(
			p_0+\sum_{\substack{j\in I\\j<i}}p_j
			\Big),
			\quad i\in I,
			&\qquad
			&\sum_{i\in I}p_i^2
			\le
			\frac{1-\alpha}{\alpha}\sup_{i\in I}c_i.
		\end{alignedat}
		\]
	\end{lemma}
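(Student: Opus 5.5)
The plan is to turn the coefficient identity into a one-parameter recursion and pick the free parameter $p_0$ by the intermediate value theorem. Put $s_i=p_0+\sum_{j\in I,\,j<i}p_j$, so that $s_1=p_0$ and $s_{i+1}=s_i+p_i$. The condition $c_i=2p_is_i$ then forces
\[
p_i=\frac{c_i}{2s_i},
\qquad
s_{i+1}=s_i+\frac{c_i}{2s_i}.
\]
Hence for every $p_0>0$ there is exactly one positive sequence $(p_i)$ satisfying the coefficient identities. Let $g(p_0)=\lim_i s_i=p_0+\sum_{i\in I}p_i$, and write $\gamma=\sup_i c_i$.

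The key algebraic observation is the telescoping identity $s_{i+1}^2-s_i^2=2p_is_i+p_i^2=c_i+p_i^2$. Summing it gives
\[
g(p_0)^2=p_0^2+(1-\alpha)+\sum_{i\in I}p_i^2 .
\]
So once $g(p_0)=1$, the condition $p_0^2+\sum_ip_i^2=\alpha$ holds automatically. It therefore suffices to find $p_0$ with $g(p_0)=1$ and then check the last inequality.

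I will restrict to $p_0\in[\sqrt\alpha/2,1]$. There $s_i\ge p_0$, so
\[
p_i\le \frac{c_i}{2p_0}\le \frac{\gamma}{\sqrt\alpha},
\qquad
\sum_i p_i\le\frac{1-\alpha}{2p_0}\le\frac1{\sqrt\alpha},
\]
and g is finite.

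\textbf{Endpoint $p_0=1$.} The identity gives $g(1)^2\ge 1+(1-\alpha)>1$.

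\textbf{Endpoint $p_0=\sqrt\alpha/2$.} We have
\[
\sum_ip_i^2\le \Bigl(\sup_i p_i\Bigr)\sum_ip_i\le \frac{\gamma}{\sqrt\alpha}\cdot\frac{1}{\sqrt\alpha}=\frac{\gamma}{\alpha}.
\]
Hence $g^2\le \alpha/4+1-\alpha+\gamma/\alpha\le 1$ as soon as $\gamma\le 3\alpha^2/4$.

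\textbf{Continuity.} Each $s_n$ is a continuous function of $p_0$, being a finite composition of continuous maps on positive reals. When $I=\mathbb N$, the tail satisfies $g-s_n=\sum_{i\ge n}c_i/(2s_i)\le \sqrt\alpha^{-1}\sum_{i\ge n}c_i$, uniformly in $p_0$. So $g$ is a uniform limit of continuous functions and is continuous. The intermediate value theorem then yields $p_0\in[\sqrt\alpha/2,1)$ with $g(p_0)=1$.

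\textbf{Final inequality.} Using $p_i\le c_i/(2p_0)$,
\[
\sum_i p_i^2\le \frac{\gamma}{2p_0}\sum_ip_i=\gamma\,\frac{1-p_0}{2p_0}.
\]
This is at most $\frac{1-\alpha}{\alpha}\gamma$ precisely when $p_0\ge \alpha/(2-\alpha)$. From $p_0^2=\alpha-\sum_ip_i^2\ge \alpha-\gamma/\alpha$, it is enough that $\alpha-\gamma/\alpha\ge \alpha^2/(2-\alpha)^2$. This is a positive margin, because $\alpha/(2-\alpha)^2<1$ for $\alpha\in(0,1)$.

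Thus one can take
\[
\gamma_0(\alpha)=\min\Bigl\{\tfrac34\alpha^2,\ \alpha\bigl(\alpha-\tfrac{\alpha^2}{(2-\alpha)^2}\bigr)\Bigr\}>0.
\]
The only delicate step is arranging the a priori bounds so that the root $p_0$ lies in a range where $p_0$ is bounded below. That lower bound is what feeds both the continuity argument and the final estimate. The telescoping identity makes everything else routine.
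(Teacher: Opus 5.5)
Your proof is correct and follows the paper's argument. It uses the same recursion $p_i=c_i/(2s_i)$, the telescoping identity $s_{i+1}^2-s_i^2=c_i+p_i^2$, uniform convergence for continuity, and the intermediate value theorem with left endpoint $\sqrt\alpha/2$. The only difference is that your detour through $p_0\ge\alpha/(2-\alpha)$, and the second term in $\gamma_0$, are unnecessary: the root already satisfies $p_0\ge\sqrt\alpha/2$, so your own bounds $\sup_i p_i\le\gamma/(2p_0)$ and $\sum_i p_i\le(1-\alpha)/(2p_0)$ give $\sum_i p_i^2\le(1-\alpha)\gamma/(4p_0^2)\le\frac{1-\alpha}{\alpha}\gamma$ directly, which is how the paper concludes.
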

	\begin{proof}
		Put $C=1-\alpha$.  For $p>0$, define
		\[
		T_1(p)=p,\qquad
		p_i(p)=\frac{c_i}{2T_i(p)},\qquad
		T_{i+1}(p)=T_i(p)+p_i(p),
		\qquad i\in I.
		\]
		If $I=\{1,\ldots,N\}$, put $T(p)=T_{N+1}(p)$.  In this case,
		$T$ is continuous by the recursion.  Suppose now that $I=\mathbb N$.
		Since $T_i(p)\ge p$, we have
		\[
		\sum_{i=1}^{\infty}p_i(p)
		\le
		\frac{1}{2p}\sum_{i=1}^{\infty}c_i
		=
		\frac{C}{2p}.
		\]
		Hence the limit $T(p)=\lim_{i\to\infty}T_i(p)$
		exists and is finite.  For every compact interval
		$[p_-,p_+]\subset(0,\infty)$,
		\[
		\begin{aligned}
			\sup_{p\in[p_-,p_+]}|T(p)-T_n(p)|
			&=
			\sup_{p\in[p_-,p_+]}
			\sum_{i=n}^{\infty}p_i(p)
			\le
			\frac{1}{2p_-}\sum_{i=n}^{\infty}c_i
			\longrightarrow0
		\end{aligned}
		\]
		as $n\to\infty$.  Thus $T_n$ converges to $T$ uniformly on every
		compact subinterval of $(0,\infty)$.  Since every $T_n$ is continuous,
		the function $T$ is continuous.
		
		The identity $T_{i+1}(p)^2-T_i(p)^2=c_i+p_i(p)^2$
		follows from the definition of $p_i(p)$.  Summing this identity over $i=1,\ldots,N$ and letting
		$N\to\infty$ in the countable case gives
		\begin{equation}\label{eq:TpQuadrat}
		T(p)^2=p^2+C+\sum_{i\in I}p_i(p)^2.
		\end{equation}
		In particular, $T(\sqrt{\alpha})>1$.  Set $\gamma_0(\alpha)=\frac{3\alpha^2}{8C}$.
		If $\sup_{i\in I}c_i\le\gamma_0(\alpha)$, then, at
		$p=\sqrt{\alpha}/2$,
		\[
		\sum_{i\in I}p_i(p)^2
		\le
		\frac{1}{4p^2}\sum_{i\in I}c_i^2
		\le
		\frac{C}{\alpha}\sup_{i\in I}c_i
		\le\frac{3\alpha}{8}.
		\]
		Consequently,
		$T(\sqrt{\alpha}/2)^2\le1-3\alpha/8<1$.  By continuity, there is
		$p_0\in(\sqrt{\alpha}/2,\sqrt{\alpha})$ such that $T(p_0)=1$.
		Put $p_i=p_i(p_0)$ for $i\in I$.  Since $T_i(p_0)=p_0+\sum_{\substack{j\in I\\j<i}}p_j$,
		the recursion gives
		\[
		p_0+\sum_{i\in I}p_i=1,
		\qquad
		c_i=2p_i\Big(
		p_0+\sum_{\substack{j\in I\\j<i}}p_j
		\Big),
		\quad i\in I.
		\]
		Evaluating \eqref{eq:TpQuadrat} at $p_0$ gives
		$p_0^2+\sum_{i\in I}p_i^2=\alpha$.  Finally,
		\[
		\sum_{i\in I}p_i^2
		\le
		\frac{1}{4p_0^2}\sum_{i\in I}c_i^2
		\le
		\frac{C}{\alpha}\sup_{i\in I}c_i,
		\]
		which proves the required estimate.
	\end{proof}
	
	\section{The local construction}\label{sec:local-construction}
	
	The next lemma constructs a totally bounded metric on an interval.  Its
	completion is the compact component used later.  A probability measure on
	a sufficiently short interval will determine the distances from this
	component to subsequent components.  The distances within the component
	are placed below a given scale and are dominated by a prescribed measure
	near the origin.
	
	\begin{lemma}
		\label{lem:local}
		Let $R>0$, and let $\zeta$ be a nonzero atomless finite Borel measure on $\Rp$
		supported on $[0,R]$ such that $0\in\supp\zeta$.  Let $\varepsilon>0$.  There are
		$b>0$ and $\rho>0$ with the following property.
		
		For every compact interval $J\subseteq\R$ with $\diam J\le\rho$,
		every atomless Borel probability measure $\eta$ on $J$, and every
		$\beta\in(0,b]$, there is a Borel metric $d$ on $J$ such that
		$(J,d)$ is totally bounded,
		\[
		\diam_d J<\varepsilon
		\qquad\text{and}\qquad
		|x-y|\le d(x,y),\quad x,y\in J.
		\]
		If $U$ and $V$ are independent with common law $\eta$, then
		\[
		\beta\,\Law(d(U,V))\le\zeta.
		\]
	\end{lemma}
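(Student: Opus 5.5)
The plan is to build $d$ as a hierarchical, almost ultrametric distance on $J$, driven by a nested sequence of partitions of $J$ into intervals. Points first separated at level $k$ receive a distance in a prescribed scale window $W_k=[R_k,2R_k]$. The value within that window is given by a transport map, so that the law of these distances is dominated by a multiple of $\zeta|_{W_k}$.

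\emph{Choice of scales and constants.} Since $0\in\supp\zeta$, we can choose $R_1<\varepsilon/4$ and a decreasing sequence $R_k\downarrow0$ with $2R_{k+1}\le R_k$ and $z_k:=\zeta(W_k)>0$ for every $k$. Then the windows $W_k$ are pairwise disjoint, and $\sum_k\zeta|_{W_k}\le\zeta$. We set $\rho=R_2$. The constant $b$ is fixed at the end, after we know how much mass each level uses.

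\emph{The partitions.} Using Lemma~\ref{lem:division} for $\eta$, in the interval form from its proof (cells of the form $E\cap(-\infty,t]$), we build nested finite partitions $\mathcal P_0=\{J\}\succ\mathcal P_1\succ\cdots$ of full-$\eta$-measure subsets of $J$ into intervals of positive mass. Level $k$ is chosen fine enough that two conditions hold. First, every cell of $\mathcal P_k$ has Euclidean length at most $R_{k+2}$, which is possible by cutting at finitely many points. Second, $\sum_{C\in\mathcal P_k}\eta(C)^2\le\delta_k$, where $\delta_k$ is tied to $z_{k+1}$ below. Inside each cell $C\in\mathcal P_{k-1}$ we order its children $C_1,\dots,C_m$ by position. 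For $x\in C_i$ and $y\in C_j$ with $i<j$ we set $d(x,y)=\phi^{C}_i(x)\in W_k$, in the spirit of Lemma~\ref{lem:annular-amalgamation} with $R=R_k$. All internal distances of $C_i$ live in windows $W_{k'}$ with $k'>k$, hence are at most $2R_{k+1}\le R_k$, so the diameter hypothesis of that lemma holds at each level. Since $|x-y|\le\operatorname{length}(C)\le R_{k+1}\le R_k\le d(x,y)$, we get $|x-y|\le d(x,y)$ directly. We also get $\diam_d J\le 2R_1<\varepsilon$. Total boundedness holds because the cells of $\mathcal P_k$ (finitely many) have $d$-diameter at most $2R_{k+1}\to0$. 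Borel measurability holds because $d$ is a countable combination of Borel pieces indexed by the cells.

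\emph{The distance law.} By Lemma~\ref{lem:distance-identity}(i), applied cell by cell and summed over levels, the part of $\Law(d(U,V))$ coming from pairs first separated at level $k$ is
\[
\sum_{C\in\mathcal P_{k-1}}\sum_i 2\eta(C_i)\,\eta\bigl(C_{>i}\bigr)\,\bigl(\phi^C_i\bigr)_\#\bigl(\eta|_{C_i}/\eta(C_i)\bigr).
\]
Its total mass is at most $\sum_{C\in\mathcal P_{k-1}}\eta(C)^2\le\delta_{k-1}$. If each $(\phi^C_i)_\#(\eta|_{C_i}/\eta(C_i))$ is at most $K\,\zeta|_{W_k}/z_k$ for a fixed constant $K$, then choosing $\delta_{k-1}\le z_k$ (with $\delta_0=1$ handled by $z_1$) gives a level-$k$ contribution of at most $K\zeta|_{W_k}$. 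Summing over $k$ gives $\Law(d(U,V))\le K\zeta$, and $b=1/K$ works.

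\emph{Main obstacle.} The hard step is choosing the maps $\phi^C_i$. They must push $\eta|_{C_i}$ to something dominated by a multiple of the normalised $\zeta|_{W_k}$, and they must also be $1$-Lipschitz for the final metric $d$ on $C_i$. The latter requires that on every subcell at a deeper level $m$, whose internal distances can be as small as $R_m$, the map $\phi^C_i$ varies by at most about $R_m$. A plain monotone transport of an atomless $\eta$ onto the possibly singular $\zeta|_{W_k}$ need not satisfy this.

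My first attempt would exploit the freedom in the partitions. I would make $\phi^C_i$ monotone and nearly constant on deep cells, by letting it be constant up to a continuous monotone correction that spreads the mass. I would then use the fact that $\eta(C')$ is tiny for deep cells $C'$: the $\zeta$-quantile interval assigned to $C'$ has small $\zeta$-mass, and I would refine the partition adaptively, after fixing $\phi$ at the previous level, so that its Euclidean image length is at most $R_m$.

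If that fails, the fallback is to enlarge $d$ on $C_i$ by adding the pseudometric $|\phi^C_i(x)-\phi^C_i(y)|$. This makes the Lipschitz condition automatic, at the cost of re-checking that the internal distances still land in the deeper windows. That can be arranged by choosing the window widths at level $k$ much smaller than $R_{k+1}$, replacing $W_k$ with a sub-window of positive $\zeta$-mass near $R_k$.
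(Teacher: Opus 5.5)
Your architecture matches the paper's: nested partitions, the first separation level selects a scale window, a transport map selects the value inside it, and $\sum_C\eta(C)^2$ is kept below the window mass. You also locate the crux correctly. However, the proposal does not resolve it, and your main plan with $W_k=[R_k,2R_k]$ cannot work as stated.

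For $x,z\in C_i$ and $y$ in a later sibling, the triangle inequality reads $|\phi^C_i(x)-\phi^C_i(z)|\le d(x,z)$. So $\phi^C_i$ must be $1$-Lipschitz on all of $(C_i,d)$, including pairs first separated at level $k+1$, and not only on deeper subcells. Hence $\diam\phi^C_i(C_i)\le\diam_d C_i\le 2R_{k+1}$. On the other hand, $\phi^C_i$ must push $\eta|_{C_i}/\eta(C_i)$ onto all of $\zeta|_{W_k}/z_k$. For $\zeta$ equal to Lebesgue measure on $[0,R]$ and $2R_{k+1}<R_k$, that image has diameter $R_k$, a contradiction. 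No refinement of later partitions can repair this, since the offending pairs lie in the fixed domain $C_i$.

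The paper avoids this by taking as window a closed interval $I_n\subseteq[a_n,2a_n]$ of positive $\zeta$-mass with $\diam I_n\le a_{n+1}$. Pairs separated at the next level then satisfy $|\Phi(x)-\Phi(z)|\le a_{n+1}\le d(x,z)$ automatically. You mention narrow windows only in your fallback. For pairs separated at levels $m\ge n+2$, the paper uses exactly the adaptive refinement you sketch: $\mathcal P_{m-1}$ is refined by preimages, under every earlier map, of an interval partition of the range with mesh at most $a_m$. Each level-$(m-1)$ cell then has image of diameter at most $a_m\le d(x,z)$. It is this preimage refinement, not the small $\zeta$-mass of the quantile interval, that controls Euclidean image length, since the support of $\zeta$ may have gaps. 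The cells then become general Borel sets; they remain intervals only if all transport maps are monotone. Your \emph{constant up to a monotone correction} device is not needed.

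Your fallback also fails. Adding $|\phi^C_i(x)-\phi^C_i(y)|$ to the internal distances, and recursively the corresponding terms for all ancestors, shifts each value by an amount not produced by any transport. The law of the distances of pairs first separated at level $m$ is then no longer $\zeta|_{W_m}/z_m$. Landing in the right window is not enough. The domination $\beta\,\Law(d(U,V))\le\zeta$ needs every distance value to come from an exact transport onto a piece of $\zeta$. For singular $\zeta$, for example one carried by a Cantor set, the shifted values typically fall into a $\zeta$-null set.

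A correct proof results from combining the narrow windows of your fallback with the adaptive refinement of your first attempt, without modifying $d$. Two smaller points remain. First, $d$ must also be defined on the $\eta$-null set of points that do not lie in a cell at every level; the paper uses the Euclidean metric there and the constant $2a_1$ between this set and the rest. Second, you should note that $b=\min\{1,z_1\}$ does not depend on $J$, $\eta$ or $\beta$.
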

	\begin{proof}
		Since $\zeta$ is atomless and $0\in\supp\zeta$, every interval
		$(0,r]$, $r>0$, has positive $\zeta$-measure.  Since
		\[
		(0,r]
		=
		\bigcup_{k=0}^{\infty}(2^{-k-1}r,2^{-k}r],
		\]
		infinitely many of these dyadic intervals have positive measure.
		Choose one of them and write it as $(a,2a]$ for some $a>0$.  Since $\zeta$ is
		atomless, $\zeta([a,2a])=\zeta((a,2a])>0$.
		Thus, for every $r>0$, there is $a>0$ such that
		$[a,2a]\subseteq(0,r]$ and $\zeta([a,2a])>0$.  Put $r_0=\frac12\min\{\varepsilon,R\}$.
		By the preceding observation, we may choose $a_1>0$ such that
		\[
		[a_1,2a_1]\subseteq(0,r_0]
		\qquad\text{and}\qquad
		\zeta([a_1,2a_1])>0.
		\]
		Suppose that $a_n$ has been chosen.  Applying the same observation
		with $r=a_n/2$, choose $a_{n+1}>0$ such that
		\[
		[a_{n+1},2a_{n+1}]
		\subseteq(0,a_n/2]
		\qquad\text{and}\qquad
		\zeta([a_{n+1},2a_{n+1}])>0.
		\]
		This defines the sequence $(a_n)_{n\ge1}$ recursively.  The inclusions
		give
		\[
		2a_1<\min\{\varepsilon,R\},
		\qquad
		2a_{n+1}<a_n,
		\qquad
		\zeta([a_n,2a_n])>0,
		\quad n\ge1.
		\]
		In fact, $a_{n+1}\le a_n/4$, and hence $a_n\to0$.  For each $n\ge1$, partition
		$(a_n,2a_n]$ into finitely many half-open intervals of length at most
		$a_{n+1}$.  One of them has positive measure.  Let $I_n$ be its
		closure and put $m_n=\zeta(I_n)$.  Atomlessness of $\zeta$ shows that taking the
		closure does not change the measure.  Hence $m_n>0$ and
		$\diam I_n\le a_{n+1}$.  The intervals $I_n$ are pairwise disjoint
		because $2a_{n+1}<a_n$.  Set $b=m_1$ and $\rho=a_1$.
		
		Fix $J$, $\eta$, and $\beta$ as in the statement.  We construct nested
		finite partitions and the Borel maps used to define $d$.  Start with
		$\mathcal P_0=\{J\}$ and $s_0=1$.  For $n\ge1$, the family
		$\mathcal P_n$ will consist of pairwise disjoint Borel sets of positive
		$\eta$-measure whose union has full measure.  Every cell of
		$\mathcal P_n$ will be contained in a cell of
		$\mathcal P_{n-1}$.  After $\mathcal P_n$ has been chosen, a map
		$\Phi_{A,B}$ will be assigned to each unordered pair $\{A,B\}$ of
		distinct cells of $\mathcal P_n$ that are contained in the same cell of
		$\mathcal P_{n-1}$.  These maps will supply the distance between points
		first separated at level $n$.
		
		Suppose that the construction is complete through level $n-1$.  Form a
		finite common Borel refinement $\mathcal R_n$ of
		$\mathcal P_{n-1}$ in which every cell has Euclidean diameter at most
		$a_{n+1}$.  We also require that its image under each map constructed at
		a level $k<n$ have diameter at most $a_{n+1}$ whenever the cell lies in
		the domain of that map.  To obtain such a refinement, first use a finite
		interval partition of $J$ with mesh at most $a_{n+1}$.  For each of the
		finitely many earlier maps, refine further using its domain, the
		complement of its domain, and the inverse images of a finite interval
		partition of a bounded interval containing its range.  Choose this
		interval partition with mesh at most $a_{n+1}$.  Discard cells of zero
		$\eta$-measure.
		
		Write
		$s_{n-1}=\sum_{C\in\mathcal P_{n-1}}\eta(C)^2$ and choose $0<\delta_n<\min\{\frac{s_{n-1}}2,2^{-n},\frac{m_{n+1}}\beta\}$.
		By Lemma~\ref{lem:division}(ii), refine $\mathcal R_n$ to a family
		$\mathcal P_n$ in which every cell has mass at most $\delta_n$.
		Since $\eta$ is a probability measure,
		\[
		s_n=\sum_{C\in\mathcal P_n}\eta(C)^2
		\le\delta_n<s_{n-1}.
		\]
		As $\mathcal P_n$ refines $\mathcal R_n$, every
		$C\in\mathcal P_n$ satisfies
		\begin{equation}
			\label{eq:local-refinement}
			\diam C\le a_{n+1},\qquad
			\diam\Phi_{A,B}(C)\le a_{n+1}.
		\end{equation}
		The second inequality applies whenever $\Phi_{A,B}$ was constructed at
		a level $k<n$ and
		$C\subseteq\operatorname{dom}\Phi_{A,B}$. 
		
		Put $q_n=s_{n-1}-s_n>0$.  For $n=1$, we have
		$\beta q_1\le\beta\le m_1$.  For $n\ge2$, the choice of
		$\delta_{n-1}$ gives
		$\beta q_n\le\beta s_{n-1}\le\beta\delta_{n-1}<m_n$.
		Lemma~\ref{lem:division}(i) therefore gives a Borel set
		$E_n\subseteq I_n$ such that $\zeta(E_n)=\beta q_n$.  Define the
		atomless probability measure
		$\eta_n=\zeta|_{E_n}/(\beta q_n)$.

		For each unordered pair $\{A,B\}$ of distinct cells of
		$\mathcal P_n$ contained in the same cell of
		$\mathcal P_{n-1}$, choose $D_{A,B}\in\{A,B\}$.
		We claim that there is a Borel map
		$\Phi_{A,B}:D_{A,B}\to E_n$ such that
		\begin{equation}\label{eq:push-forward}
		(\Phi_{A,B})_\#
		\frac{\eta|_{D_{A,B}}}{\eta(D_{A,B})}
		=
		\eta_n.
		\end{equation}
		To prove this, write $D=D_{A,B}$ and let
		$\mu_D=\eta|_D/\eta(D)$.  Its distribution function
		\[
		F_D(x)=\mu_D(D\cap(-\infty,x]),\qquad x\in\R,
		\]
		is continuous because its jump at $x$ is
		$\mu_D(\{x\})=0$.  Thus the Borel function
		$T_D:D\to[0,1]$ given by $T_D(x)=F_D(x)$ sends $\mu_D$ to Lebesgue
		measure on $[0,1]$.  Indeed, the continuity of $F_D$ gives
		$\mathbb P(F_D(X)\le u)=u$ for $0\le u\le1$ whenever $X$ has law
		$\mu_D$.  A Borel generalised quantile function
		$Q_n:[0,1]\to\Rp$ for $\eta_n$ sends Lebesgue measure on $[0,1]$ to
		$\eta_n$.  The composition $Q_n\circ T_D$ therefore sends $\mu_D$ to
		$\eta_n$.  It belongs to $E_n$ outside a Borel $\mu_D$-null set because
		$\eta_n(E_n)=1$.  Changing its value on this set to a fixed point of
		$E_n$ gives the required map $\Phi_{A,B}$. Carrying out this construction for every such pair completes the
		induction step.
		
		Let
		\[
		\Omega=\bigcap_{n\ge0}\bigcup_{C\in\mathcal P_n}C,\qquad
		\mathcal P_n^\Omega
		=\{C\cap\Omega:C\in\mathcal P_n\}.
		\]
		The set $\Omega$ is Borel and $\eta(\Omega)=1$.  Each
		$C\cap\Omega$ has the same $\eta$-measure as $C$.  Thus the families
		$\mathcal P_n^\Omega$ are nested partitions of $\Omega$ into nonempty
		Borel sets.  Restrict each $\Phi_{A,B}$ to $D_{A,B}\cap\Omega$ and retain the same
		notation for the restricted map.  Since
		$\eta(D_{A,B}\setminus\Omega)=0$, we have
		$\eta(D_{A,B}\cap\Omega)=\eta(D_{A,B})$, and hence
		\[
		(\Phi_{A,B})_\#
		\frac{\eta|_{D_{A,B}\cap\Omega}}
		{\eta(D_{A,B}\cap\Omega)}
		=
		\eta_n.
		\]
		
		For distinct $x,y\in\Omega$, define $N(x,y)$ to be the least
		$n\ge1$ for which $x$ and $y$ lie in different cells of
		$\mathcal P_n^\Omega$.  This number is finite.  Indeed, if the points
		belonged to the same cell at every level, the first inequality in
		\eqref{eq:local-refinement} would give $|x-y|\le a_{n+1}$ for every
		$n\ge1$, and hence $x=y$.
		We set $N(x,y)=0$ for all remaining pairs $(x,y)\in J\times J$.  The
		resulting map $N:J\times J\to\{0,1,2,\ldots\}$ is Borel measurable.
		Indeed, for every $n\ge1$, the set
		$\{(x,y)\in J\times J:N(x,y)=n\}$ is a finite union of Borel
		rectangles, while the level set corresponding to $n=0$ is the
		complement of the union of these sets over all $n\ge1$.
		
		For distinct $x,y\in\Omega$, let $n=N(x,y)$, and let $A$ and $B$ be the cells of
		$\mathcal P_n$ whose intersections with $\Omega$ contain $x$ and $y$,
		respectively.  Define $d(x,x)=0$ and
		\[
		d(x,y)=d(y,x)=
		\begin{cases}
			\Phi_{A,B}(x),&D_{A,B}=A,\\
			\Phi_{A,B}(y),&D_{A,B}=B.
		\end{cases}
		\]
		Then $d(x,y)\in E_n\subseteq[a_n,2a_n]$, so $d$ is symmetric and
		positive away from the diagonal.  We also have
		$|x-y|\le d(x,y)$.  If $n=1$, this follows from
		$\diam J\le a_1$.  If $n\ge2$, the points lie in a common cell at
		level $n-1$, whose Euclidean diameter is at most $a_n$ by
		\eqref{eq:local-refinement}.
		
		We verify the triangle inequality on $\Omega$.  Triangles with a
		repeated point need no further argument.  For three distinct points,
		the minimum of their three first-separation levels is attained at least
		twice.  At the first level at which two of the points are separated,
		either one point is in a different cell from the other two or all three
		points are in different cells.  In both cases at least two pairs are
		separated at that level.  After relabelling, we may assume that
		\[
		N(x,y)=N(y,z)=n\le m=N(x,z).
		\]
		If $m=n$, all three distances belong to $[a_n,2a_n]$, so each is at
		most the sum of the other two.
		
		Suppose that $m>n$.  At level $n$, the points $x$ and $z$ lie in one
		cell and $y$ lies in another cell with the same parent.  If the map for
		this pair is defined on the cell containing $y$, then
		$d(x,y)=d(z,y)$.  Otherwise the map is evaluated at $x$ and $z$.
		If $m=n+1$, its range lies in $E_n\subseteq I_n$, whose diameter is at
		most $a_{n+1}=a_m$.  If $m\ge n+2$, then $x$ and $z$ lie in a common
		cell at level $m-1$.  The second inequality in
		\eqref{eq:local-refinement}, applied at that level, again bounds the
		difference of the two values by $a_m$.  In both cases,
		\[
		|d(x,y)-d(z,y)|\le a_m\le d(x,z).
		\]
		Figure~\ref{fig:local-construction} illustrates the case
		$m\ge n+2$ in which the map is evaluated at $x$ and $z$.  This proves
		the two triangle inequalities having $d(x,y)$ or $d(z,y)$ on the left.
		The remaining one follows from
		\[
		d(x,z)\le2a_m\le2a_n\le d(x,y)+d(y,z).
		\]
		
		\begin{figure}[t]
			\centering
			\begin{tikzpicture}[
				every node/.style={font=\small},
				cell/.style={
					draw,
					rounded corners=1pt,
					minimum height=7mm,
					minimum width=15mm,
					inner sep=2pt
				},
				selected/.style={cell,fill=black!8},
				map/.style={-{Stealth[length=2mm]},semithick}
				]
				\node[cell] (P) at (-1.2,4.2) {$P$};
				\node[cell,minimum width=18mm] (B) at (-2.8,2.9) {$B\ni y$};
				\node[selected,minimum width=25mm] (A) at (0.4,2.9)
				{$A=D_{A,B}$};
				\node[selected,minimum width=24mm] (C) at (0.4,1.45)
				{$C\ni x,z$};
				\node[cell,minimum width=19mm] (X) at (-0.65,0)
				{$C_x\ni x$};
				\node[cell,minimum width=19mm] (Z) at (1.45,0)
				{$C_z\ni z$};
				
				\draw (P.east) .. controls (0.15,4.05) and (1.45,3.75) .. (A.north east);
				\draw (P.west) .. controls (-2.55,4.05) and (-3.85,3.75) .. (B.north west);
				\draw[densely dashed] (A) --
				node[right,font=\scriptsize] {$n+1,\ldots,m-1$} (C);
				\draw (C.west) .. controls (-1.25,1.2) and (-1.65,0.75) .. (X.north west);
				\draw (C.east) .. controls (2.05,1.2) and (2.45,0.75) .. (Z.north east);
				
				\node[anchor=east] at (-4.55,4.2) {$\mathcal P_{n-1}$};
				\node[anchor=east] at (-4.55,2.9) {$\mathcal P_n$};
				\node[anchor=east] at (-4.55,1.45) {$\mathcal P_{m-1}$};
				\node[anchor=east] at (-4.55,0) {$\mathcal P_m$};
				
				\node[font=\scriptsize] at (-1.15,3.45)
				{$N(x,y)=N(z,y)=n$};
				\node[font=\scriptsize] at (0.4,0.55)
				{$N(x,z)=m$};
				
				\draw (3.15,1.85) -- (7.45,1.85);
				\draw (3.15,1.72) -- (3.15,1.98);
				\draw (7.45,1.72) -- (7.45,1.98);
				
				\node[below=2pt] at (3.15,1.72) {$a_n$};
				\node[below=2pt] at (7.45,1.72) {$2a_n$};
				
				\draw[line width=2.5pt,black!35]
				(3.85,1.85) -- (6.75,1.85);
				\node[above=3pt] at (5.3,1.85) {$E_n$};
				
				\draw[line width=4pt,black!70]
				(4.85,1.85) -- (5.75,1.85);
				
				\fill (5.05,1.85) circle (1.5pt);
				\fill (5.57,1.85) circle (1.5pt);
				
				\node at (4.65,2.55) {$\Phi(x)$};
				\node at (6.02,2.55) {$\Phi(z)$};
				
				\draw[thin] (4.82,2.35) -- (5.05,1.9);
				\draw[thin] (5.86,2.35) -- (5.57,1.9);
				
				\draw[
				decorate,
				decoration={brace,amplitude=4pt,mirror}
				]
				(4.85,1.25) -- (5.75,1.25)
				node[midway,below=5pt]
				{$\diam\Phi(C)\le a_m$};
				
				\draw[map] (A.east) to[out=15,in=155]
				node[above,sloped] {$\Phi=\Phi_{A,B}$}
				(4.15,2.02);
				
				\node at (5.45,3.25)
				{$\Phi_\#(\eta|_A/\eta(A))=\eta_n$};
				
				\node at (1.45,-1.15)
				{$|d(x,y)-d(z,y)|
					=|\Phi(x)-\Phi(z)|
					\le a_m
					\le d(x,z)$};
			\end{tikzpicture}
			\caption{The case $N(x,y)=N(z,y)=n<m=N(x,z)$ with $m\ge n+2$.
				The map $\Phi_{A,B}$ is defined on $A$ and is evaluated at $x$ and
				$z$.  Since $x$ and $z$ lie in the same cell
				$C\in\mathcal P_{m-1}$, the second inequality in
				\eqref{eq:local-refinement} gives
				$\diam\Phi_{A,B}(C)\le a_m$.  Moreover,
				$d(x,z)\in E_m\subseteq[a_m,2a_m]$.}
			\label{fig:local-construction}
		\end{figure}
		
		If two points of $\Omega$ lie in the same cell of
		$\mathcal P_n^\Omega$, their first separation level is at least
		$n+1$, and their distance is therefore at most $2a_{n+1}$.  Given
		$\delta>0$, choose $n$ such that $2a_{n+1}<\delta$ and choose one point
		from each cell of $\mathcal P_n^\Omega$.  These points form a finite
		$\delta$-net for $\Omega$.  Hence $(\Omega,d)$ is totally bounded.
		
		It remains to define $d$ on $J\setminus\Omega$.  Use the Euclidean
		distance on this set and put $d(x,y)=2a_1$ when exactly one of $x$ and
		$y$ belongs to $\Omega$.  In every triangle meeting both sets, two
		distances equal $2a_1$ and the remaining distance is at most $2a_1$.
		Thus the triangle inequality continues to hold.  We also have
		$|x-y|\le d(x,y)$ and
		$\diam(J,d)\le2a_1<\varepsilon$.  The set $J\setminus\Omega$ is totally bounded under the Euclidean
		metric because it is contained in the compact interval $J$.  Given
		$\delta>0$, the union of a finite $\delta$-net for $\Omega$ and a
		finite $\delta$-net for $J\setminus\Omega$ is a finite $\delta$-net
		for $(J,d)$.  Hence $(J,d)$ is totally bounded.
		
		The metric is Borel.  The product $J\times J$ is the disjoint union of
		$(J\setminus\Omega)^2$, $\Omega\times(J\setminus\Omega)$,
		$(J\setminus\Omega)\times\Omega$, the diagonal in
		$\Omega\times\Omega$, and the rectangles $(A\cap\Omega)\times(B\cap\Omega)$ and $(B\cap\Omega)\times(A\cap\Omega)$,
		where $n\ge1$ and $A,B\in\mathcal P_n$ are distinct cells contained
		in the same cell of $\mathcal P_{n-1}$.  There are only countably many
		such Borel sets.  On each of them, $d$ is the Euclidean distance, the
		constant $2a_1$, zero, or the composition of a coordinate projection
		with one of the Borel maps $\Phi_{A,B}$.  Hence
		$d:J\times J\to\Rp$ is Borel measurable.
		
		Let $U$ and $V$ be independent with common law $\eta$.  Almost surely,
		$U,V\in\Omega$ and $U\ne V$.  The probability that $U$ and $V$ lie in the same cell of
		$\mathcal P_n^\Omega$ is $s_n=\sum_{C\in\mathcal P_n}\eta(C)^2$.
		The events that $U$ and $V$ lie in the same cell at levels $n-1$ and
		$n$ are nested.  Hence, by the definition of $q_n$,
		\[
		\mathbb P(N(U,V)=n)=s_{n-1}-s_n=q_n.
		\]
		Since $s_n\le\delta_n<2^{-n}$, we have $s_n\to0$.  For every $K\ge1$, $\sum_{n=1}^Kq_n=s_0-s_K$.
		Consequently,
		\[
		\sum_{n=1}^{\infty}q_n
		=
		\lim_{K\to\infty}(s_0-s_K)
		=
		1.
		\]
		
		By \eqref{eq:push-forward} and the restriction to the full-measure set
		$\Omega$, the measure $\eta_n$ is the conditional distribution of $d(U,V)$
		given $N(U,V)=n$.  To verify this, let
		$g:\Rp\to\R$ be a bounded Borel function.  An unordered pair
		$\{A,B\}$ of distinct cells of $\mathcal P_n$ with a common parent
		contributes
		\[
		2\eta(A)\eta(B)
		\int_{\Rp}g(t)\,\eta_n(dt)
		\]
		to
		$\mathbb E[g(d(U,V))\mathbf 1_{\{N(U,V)=n\}}]$.  For a fixed cell
		$P\in\mathcal P_{n-1}$, the sum of the coefficients
		$2\eta(A)\eta(B)$ over its distinct level-$n$ children is $\eta(P)^2-\sum_{\substack{A\in\mathcal P_n\\A\subseteq P}}\eta(A)^2$.
		Summing over $P\in\mathcal P_{n-1}$ therefore gives
		\[
		\begin{aligned}
			\mathbb E\big[
			g(d(U,V))\mathbf 1_{\{N(U,V)=n\}}
			\big]
			&=
			\big(
			s_{n-1}-s_n
			\big)
			\int_{\Rp}g(t)\,\eta_n(dt)
			=
			q_n\int_{\Rp}g(t)\,\eta_n(dt).
		\end{aligned}
		\]
		This proves the asserted conditional distribution.  Consequently,
		\[
		\beta\,\Law(d(U,V))
		=
		\sum_{n=1}^{\infty}\beta q_n\eta_n
		=
		\sum_{n=1}^{\infty}\zeta|_{E_n}
		\le\zeta.
		\]
		The final inequality holds because $E_n\subseteq I_n$ for every
		$n\ge1$ and the intervals $I_n$, $n\ge1$, are pairwise disjoint.
	\end{proof}
	
The later constructions require compact components.  The metric space
$(J,d)$ obtained in Lemma~\ref{lem:local} is totally bounded but need
not be complete, so we take its metric completion.  The measure $\eta$
is transported to the completion by the canonical isometric embedding
of $J$.  Since this embedding is isometric, the sampled distance law is
unchanged.

\begin{corollary}
	\label{cor:compact-block}
	Let $R,\zeta,\varepsilon,b,\rho$ be as in
	Lemma~\ref{lem:local}, and fix $J,\eta,\beta$ satisfying the
	conditions in its conclusion. Then there exist a compact metric probability space
	$(X,d_X,\mu)$ and a $1$-Lipschitz map $\phi:X\to J$ such that
	\[
	\phi_\#\mu=\eta,\qquad
	\diam X<\varepsilon,\qquad
	\beta\,\Law(d_X(U,V))\le\zeta.
	\]
	Here $U$ and $V$ are independent with common law $\mu$.
\end{corollary}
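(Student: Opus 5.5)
We apply Lemma~\ref{lem:local} to the fixed data $J,\eta,\beta$. This gives a Borel metric $d$ on $J$ such that $(J,d)$ is totally bounded, $\diam_d J<\varepsilon$, $|x-y|\le d(x,y)$ for all $x,y\in J$, and $\beta\,\Law(d(U,V))\le\zeta$ for independent $U,V\sim\eta$. Let $(X,d_X)$ be the metric completion of $(J,d)$, and let $\iota:J\to X$ be the canonical isometric embedding. Since $(J,d)$ is totally bounded, its completion is complete and totally bounded, hence compact. Moreover, $\diam X=\diam_d J<\varepsilon$, because the closure of a set has the same diameter.

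To define $\phi$, observe that the identity map $(J,d)\to(J,|\cdot|)$ is $1$-Lipschitz. Its target $J$ is a compact, hence complete, interval. A $1$-Lipschitz map into a complete space is uniformly continuous, so it extends uniquely to a $1$-Lipschitz map $\phi:X\to J$ with $\phi\circ\iota=\mathrm{id}_J$. Concretely, a $d$-Cauchy sequence in $J$ is Euclidean Cauchy, and $\phi$ sends its class to the Euclidean limit. The Lipschitz bound passes to the limit.

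To define $\mu$, we push $\eta$ forward by $\iota$. We must check that $\iota:(J,\Borel(J))\to(X,\Borel(X))$ is measurable. It suffices to show that $x\mapsto d_X(\iota(x),z)$ is Borel on $J$ for each $z$ in a countable dense subset of $X$, since open balls generate $\Borel(X)$ in a separable metric space. Because $(J,d)$ is totally bounded, it is separable, so we may take this dense subset to be $\iota(D)$ for a countable $d$-dense set $D\subseteq J$. For $z=\iota(w)$ with $w\in D$, the map $x\mapsto d(x,w)$ is a section of the Borel function $d$ on $J\times J$, hence Borel. Thus $\iota^{-1}(B_{d_X}(\iota(w),r))$ is Borel for all $w\in D$ and $r>0$. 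Every open set of $X$ is a countable union of such balls, so $\iota$ is measurable.

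This measurability check is the main subtlety: the Borel structure on $J$ is the Euclidean one, not the one induced by $d$, and only the joint measurability of $d$ from Lemma~\ref{lem:local} links the two.

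Now set $\mu=\iota_\#\eta$, a Borel probability measure on the compact space $X$. Then $\phi_\#\mu=(\phi\circ\iota)_\#\eta=\eta$. If $U',V'$ are independent with law $\eta$, then $\iota(U'),\iota(V')$ are independent with law $\mu$. Since $\iota$ is an isometry, $d_X(\iota(U'),\iota(V'))=d(U',V')$, so $\Law(d_X(U,V))=\Law(d(U',V'))$. Hence $\beta\,\Law(d_X(U,V))\le\zeta$, which completes the argument.
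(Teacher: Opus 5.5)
Your proof is correct and follows the paper's argument essentially step by step: you take the completion of the totally bounded space $(J,d)$, show $\iota$ is Borel using the joint measurability of $d$ and separability, extend the identity to a $1$-Lipschitz $\phi$, and set $\mu=\iota_\#\eta$, whose distance law is preserved because $\iota$ is an isometry. Your measurability check, via preimages of balls in $X$ centred at $\iota(D)$, is a slightly more explicit form of the paper's observation that $d$-open sets are Euclidean Borel.
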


\begin{proof}
	Let $d$ be the metric provided by Lemma~\ref{lem:local}.  Let
	$(X,d_X)$ be the completion of $(J,d)$ and let
	$\iota:J\to X$ be the canonical isometric embedding.  Since $(J,d)$
	is totally bounded, its completion $X$ is compact.
	
	The map $\iota$ is Borel measurable with respect to the original
	Borel structure on $J$.  Indeed, every $d$-ball is a Euclidean Borel
	subset of $J$ because $d:J\times J\to\Rp$ is Borel measurable.
	Moreover, $(J,d)$ is separable, so its topology has a countable basis
	consisting of $d$-balls.  It follows that every $d$-open subset of
	$J$ is Euclidean Borel.  Since $\iota$ is continuous with respect to
	the $d$-topology, it is Borel measurable with respect to the
	Euclidean Borel structure on $J$.
	
	The identity map from $(J,d)$ to the Euclidean interval $J$ is
	$1$-Lipschitz because $|x-y|\le d(x,y)$ for all $x,y\in J$.
	Since the Euclidean interval $J$ is complete, this map extends
	uniquely to a $1$-Lipschitz map $\phi:X\to J$.  In particular,
	$\phi\circ\iota$ is the identity on $J$.
	
	Put $\mu=\iota_\#\eta$.  Then $\mu$ is a Borel probability measure on
	$X$, and $\phi_\#\mu=\eta$.  Since $\iota(J)$ is dense in $X$, taking
	the completion does not change the diameter.  Hence
	$\diam X=\diam(J,d)<\varepsilon$.
	
	Finally, the isometry property of $\iota$ gives
	\[
	(d_X)_\#(\mu\otimes\mu)
	=
	(d_X\circ(\iota\times\iota))_\#(\eta\otimes\eta)
	=
	d_\#(\eta\otimes\eta).
	\]
	Thus completion does not change the distance law.  Together with
	Lemma~\ref{lem:local}, the preceding identity gives
	$\beta\,\Law(d_X(U,V))\le\zeta$.
\end{proof}
	
We shall also use the following scaling consequence.

\begin{corollary}
	\label{cor:homogeneity}
	Let $R$, $\varepsilon$, and $\zeta$ be as in
	Lemma~\ref{lem:local}, and let $b,\rho$ have the property stated there.
	For every $c>0$, the constants $cb,\rho$ have the same property with
	$\zeta$ replaced by $c\zeta$.
\end{corollary}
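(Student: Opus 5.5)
The statement is a direct consequence of the fact that the hypotheses of Lemma~\ref{lem:local} and the inequality in its conclusion are both unchanged when $\zeta$ and $\beta$ are multiplied by compatible constants. I would first check that $c\zeta$ is an admissible input for Lemma~\ref{lem:local} with the same $R$ and $\varepsilon$. It is again a nonzero atomless finite Borel measure on $\Rp$ supported on $[0,R]$. Scaling by $c>0$ changes neither which sets are null nor which points lie in the support, so $0\in\supp(c\zeta)$ as well.

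Now fix a compact interval $J$ with $\diam J\le\rho$, an atomless Borel probability measure $\eta$ on $J$, and a number $\beta'\in(0,cb]$. Put $\beta=\beta'/c$, so that $\beta\in(0,b]$. Because $b,\rho$ have the property of Lemma~\ref{lem:local} for $\zeta$, there is a Borel metric $d$ on $J$ such that $(J,d)$ is totally bounded, $\diam_d J<\varepsilon$, $|x-y|\le d(x,y)$ for all $x,y\in J$, and $\beta\,\Law(d(U,V))\le\zeta$. Multiplying this inequality of measures by $c$ gives
\[
\beta'\,\Law(d(U,V))=c\beta\,\Law(d(U,V))\le c\zeta .
\]
The remaining requirements on $d$ do not involve $\zeta$ or $\beta$, so they continue to hold. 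Hence the same metric $d$ witnesses the property for the constants $cb,\rho$ and the measure $c\zeta$.

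There is no real obstacle. The only points to verify are that the domination relation $\le$ between finite measures is preserved under multiplication by a positive scalar, which is immediate from the setwise definition, and that $\rho$ does not need to be changed. The latter holds because the constraints $\diam J\le\rho$ and $\diam_d J<\varepsilon$ are unaffected by rescaling the masses.
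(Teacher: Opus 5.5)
Your proof is correct and is essentially the paper's argument: both rescale $\beta'\in(0,cb]$ to $\beta'/c\in(0,b]$, invoke the property of $b,\rho$ for $\zeta$, and multiply the resulting measure inequality by $c$, leaving all other conclusions and $\rho$ unchanged. Your extra check that $c\zeta$ still satisfies the hypotheses of Lemma~\ref{lem:local} is harmless and not needed by the paper's proof.
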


\begin{proof}
	Fix $c>0$.  Let $J\subseteq\R$ be a compact interval, let $\eta$ be an
	atomless Borel probability measure on $J$, and suppose that
	$\diam J\le\rho$ and $\beta\in(0,cb]$.  Then
	$\beta/c\in(0,b]$.  The property of $b,\rho$ gives a metric $d$
	satisfying the conclusions of Lemma~\ref{lem:local} and
	\[
	\frac{\beta}{c}\,\Law(d(U,V))\le\zeta,
	\]
	where $U$ and $V$ are independent with $\Law(U)=\Law(V)=\eta$.
	Multiplying this inequality by $c$ gives
	$\beta\,\Law(d(U,V))\le c\zeta$.
\end{proof}

	\section{The compactly supported case}\label{sec:compact-case}

	We first use the compact components provided by
	Corollary~\ref{cor:compact-block} to treat one interval of distances.
	Suppose that the target measure is decomposed as
	$\Theta=\zeta+\eta$, where $\zeta$ is supported on $[0,R]$ and $\eta$
	on $[R,2R]$.  The measure $\eta$ will be realised by distances between
	different compact components.  The distances within these components
	are dominated by part of $\zeta$, and the remainder of $\zeta$ is
	retained for the next step.  Figure~\ref{fig:ordered-gluing}
	illustrates the construction.
	
	\begin{lemma}
		\label{lem:annulus-transfer}
		Let $\zeta$ and $\eta$ be finite Borel measures on $\Rp$ such that
		$\Theta=\zeta+\eta$ is an atomless probability measure and
		\[
		\supp\zeta\subseteq[0,R],\qquad
		0\in\supp\zeta,\qquad
		\supp\eta\subseteq[R,2R]
		\]
		for some $R>0$.  Then there are an integer $N\ge0$, compact metric
		probability spaces $(X_i,d_i,\mu_i)$, $i=1,\ldots,N$, positive
		numbers $p_1,\ldots,p_N,p_\ast$, and $1$-Lipschitz maps
		$\phi_i:X_i\to[R,2R]$ such that $\diam X_i<R$, $i=1,\ldots,N$ and $p_\ast+\sum_{i=1}^N p_i=1$.
		For independent $U_i,V_i\sim\mu_i$, let
		$\nu_i=\Law(d_i(U_i,V_i))$ and set $\tau=\sum_{i=1}^N p_i^2\nu_i$.
		Then $\tau\le\frac12\zeta$.  Moreover, the atomless measure
		$\zeta'=\zeta-\tau$ satisfies
		\[
		\supp\zeta'\subseteq[0,R],\qquad
		\zeta'(\Rp)=p_\ast^2,\qquad
		\zeta'\ge\frac12\zeta,\qquad
		0\in\supp\zeta'.
		\]
		Finally,
		\[
		\eta=\sum_{i=1}^N c_i\eta_i,\qquad
		\eta_i=(\phi_i)_\#\mu_i,\qquad
		c_i=2p_i\Big(
		p_\ast+\sum_{j=i+1}^N p_j
		\Big).
		\]
		If $\zeta'/p_\ast^2$ has a compact realisation of diameter at most
		$R$, then $\Theta$ has a compact realisation.
	\end{lemma}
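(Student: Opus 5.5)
The plan is to cut $\eta$ into many small pieces, realise each piece by the compact block of Corollary~\ref{cor:compact-block}, and choose the weights with Lemma~\ref{lem:mass-selection} in reversed order. The case $\eta=0$ is trivial: take $N=0$ and $p_\ast=1$, so that $\zeta'=\zeta=\Theta$. Otherwise put $\alpha=\zeta(\Rp)$. It satisfies $0<\alpha<1$, because $0\in\supp\zeta$ and $\eta\ne0$.

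\emph{Choice of constants.} Apply Lemma~\ref{lem:local} to $\tfrac12\zeta$ with this $R$ and with $\varepsilon=R$. This is admissible, since $\tfrac12\zeta$ is atomless, nonzero, supported in $[0,R]$ and has $0$ in its support; it gives constants $b,\rho$. Next choose $\gamma>0$ so small that $\gamma\le\gamma_0(\alpha)$ and $\frac{1-\alpha}{\alpha}\gamma\le b$.

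\emph{Decomposition of $\eta$.} Partition $[R,2R]$ into finitely many compact intervals $J_k$ of length at most $\rho$ that overlap only at endpoints. Then use Lemma~\ref{lem:division}(i) to split $\eta|_{J_k}$ further into finitely many pieces of mass at most $\gamma$. This works because $\eta$ is atomless, being dominated by $\Theta$ (Lemma~\ref{lem:division}(iii)); endpoints carry no mass. Discard the null pieces. The result is $\eta=\sum_{i=1}^N c_i\eta_i$, where each $\eta_i$ is an atomless probability measure on some interval $J_{k(i)}$ with $\diam J_{k(i)}\le\rho$, the $c_i>0$ satisfy $c_i\le\gamma$, and $\sum_i c_i=1-\alpha$.

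\emph{Weights and blocks.} Apply Lemma~\ref{lem:mass-selection} to the reversed family $c_N,\dots,c_1$. Writing $p_\ast$ for its $p_0$, this yields positive weights with
\[
p_\ast+\sum_i p_i=1,\qquad p_\ast^2+\sum_i p_i^2=\alpha,\qquad c_i=2p_i\Bigl(p_\ast+\sum_{j>i}p_j\Bigr),
\]
together with $\sum_i p_i^2\le\frac{1-\alpha}{\alpha}\gamma\le b$. In particular $p_i^2\le b$ for each $i$. Corollary~\ref{cor:compact-block}, applied with $\beta=p_i^2$, then gives compact spaces $X_i$ with $\diam X_i<R$, $1$-Lipschitz maps $\phi_i:X_i\to J_{k(i)}\subseteq[R,2R]$ with $(\phi_i)_\#\mu_i=\eta_i$, and $p_i^2\nu_i\le\frac12\zeta$.

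\emph{Main obstacle.} The difficulty is that the per-block bound does not give $\tau\le\frac12\zeta$ after summing over the $N$ blocks. The remedy is to split $\frac12\zeta$ itself. Divide $\frac12\zeta$ into $N$ parts $\zeta_i$ with $\zeta_i=\frac{p_i^2}{\sum_j p_j^2}\cdot\frac12\zeta$. Each $\zeta_i$ still has $0$ in its support, and the total mass constraint is respected. The cost is that Lemma~\ref{lem:local} must now be run for each $\zeta_i$ separately. By the scaling of Corollary~\ref{cor:homogeneity}, the lemma for $c\cdot\frac12\zeta$ holds with constants $cb,\rho$. Setting $c_i'=p_i^2/\sum_j p_j^2$, the condition needed is $p_i^2\le c_i'b$, which is equivalent to $\sum_j p_j^2\le b$. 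This is exactly what the choice of $\gamma$ ensures, and $\rho$ is independent of $c$, so the partition of $[R,2R]$ is unaffected. Summing then gives $\tau\le\sum_i\zeta_i=\frac12\zeta$.

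\emph{Remaining properties.} From $\tau\le\frac12\zeta$ we get $\zeta'=\zeta-\tau\ge\frac12\zeta$. Hence $0\in\supp\zeta'$, $\supp\zeta'\subseteq[0,R]$, and $\zeta'$ is atomless. Its mass is $\zeta'(\Rp)=\alpha-\sum_i p_i^2=p_\ast^2$.

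\emph{Final assertion.} Let $(X_\ast,d_\ast,\mu_\ast)$ be a compact realisation of $\zeta'/p_\ast^2$ with diameter at most $R$. Glue $X_1,\dots,X_N,X_\ast$ in this order using Lemma~\ref{lem:annular-amalgamation}. The result is a compact metric space, since it is a finite union of compact pieces, each of which is clopen. Lemma~\ref{lem:distance-identity}(i) then gives the distance law
\[
p_\ast^2\cdot\frac{\zeta'}{p_\ast^2}+\tau+\sum_i c_i\eta_i=\zeta'+\tau+\eta=\Theta.
\]
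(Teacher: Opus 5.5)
Your proof is correct and follows essentially the same route as the paper. You use the same pieces: the subdivision of $\eta$ into pieces on intervals of length at most $\rho$ with small mass, the reversed application of Lemma~\ref{lem:mass-selection}, and the splitting $\zeta_i=p_i^2\zeta/(2\sum_j p_j^2)$ combined with Corollary~\ref{cor:homogeneity}, which makes the internal laws sum to at most $\frac12\zeta$. The differences are cosmetic. You apply Lemma~\ref{lem:local} to $\frac12\zeta$ and require $\sum_j p_j^2\le b$, whereas the paper applies it to $\zeta$ and requires $\sum_j p_j^2\le b/2$. Also, the preliminary per-block bound $p_i^2\nu_i\le\frac12\zeta$ in your ``Weights and blocks'' paragraph is superseded by the split and can be dropped.
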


\begin{figure}[t]
	\centering
	\begin{tikzpicture}[
		every node/.style={font=\small},
		component/.style={
			draw,
			rounded corners=2pt,
			minimum width=13mm,
			minimum height=10mm,
			fill=white,
			line width=0.55pt
		},
		active/.style={
			component,
			fill=black!10,
			line width=0.8pt
		},
		terminal/.style={
			component,
			fill=black!18,
			line width=0.8pt
		},
		group/.style={
			draw=black!55,
			dashed,
			rounded corners=3pt,
			line width=0.55pt
		},
		map/.style={
			-{Stealth[length=2mm]},
			semithick
		},
		contribution/.style={
			-{Stealth[length=2mm]},
			line width=0.65pt
		},
		formula/.style={
			draw=black!45,
			rounded corners=2pt,
			fill=white,
			inner sep=4pt
		},
		lower/.style={
			draw,
			rounded corners=2pt,
			fill=black!7,
			minimum height=16mm
		},
		upper/.style={
			draw,
			rounded corners=2pt,
			fill=black!16,
			minimum height=16mm
		}
		]
		\node[anchor=west,font=\small] at (0.05,7.55)
		{ordered components};
		\node[font=\small] at (5.15,2.08)
		{$\Theta=\zeta+\eta$};
		
		\node[
		component,
		label={[label distance=2pt]below:$p_1$}
		] (xone) at (0.8,6.05) {$X_1$};
		
		\node at (1.75,6.05) {$\cdots$};
		
		\node[
		active,
		label={[label distance=2pt]below:$p_i$}
		] (xi) at (2.8,6.05) {};
		
		\node at (2.8,6.25) {$X_i$};
		
		\draw[group] (3.95,4.75) rectangle (11.75,6.75);
		
		\node[anchor=east,font=\scriptsize] at (11.55,6.53)
		{later components};
		
		\node[
		component,
		label={[label distance=2pt]below:$p_{i+1}$}
		] (xip) at (4.75,5.90) {};
		
		\node at (4.75,6.10) {$X_{i+1}$};
		\node at (5.8,5.90) {$\cdots$};
		
		\node[
		component,
		label={[label distance=2pt]below:$p_N$}
		] (xn) at (7.0,5.90) {$X_N$};
		
		\node at (8.1,5.90) {$\cdots$};
		
		\node[
		terminal,
		label={[label distance=2pt]below:$p_\ast$}
		] (xstar) at (10.25,5.90) {$X_\ast$};
		
		\path (xi.center) ++(0,-0.18) coordinate (xpoint);
		\path (xip.center) ++(0,-0.18) coordinate (ypoint);
		
		\fill (xpoint) circle (1.45pt);
		\node[
		below right=-1pt and 2pt,
		font=\scriptsize
		] at (xpoint) {$x$};
		
		\fill (ypoint) circle (1.45pt);
		\node[
		below right=-1pt and 2pt,
		font=\scriptsize
		] at (ypoint) {$y$};
		
		\draw[map] (xpoint) to[out=30,in=150] (ypoint);
		
		\node[fill=white,inner sep=1.5pt] at (3.77,7.12)
		{$d(x,y)=\phi_i(x)$};
		
		\draw[
		decorate,
		decoration={
			brace,
			amplitude=4pt,
			mirror
		}
		]
		(4.15,4.50) -- (11.55,4.50)
		node[midway,below=5pt,align=center]
		{total mass
			$p_\ast+\textstyle\sum_{j>i}p_j$};
		
		\draw[contribution]
		([xshift=-4mm]xi.south)
		to[out=-105,in=100]
		node[pos=0.54,left=2pt,align=right]
		{same component\\[-1pt]$p_i^2\nu_i$}
		(2.35,1.82);
		
		\node[formula,align=center] (coeff) at (8.15,3.10)
		{pairs joining $X_i$ to later components\\[1pt]
			$c_i=2p_i
			(p_\ast+\textstyle\sum_{j>i}p_j)$,
			$\eta_i=(\phi_i)_\#\mu_i$};
		
		\draw[contribution] (coeff.south) --
		node[pos=0.48,right=3pt] {$c_i\eta_i$}
		(8.15,1.82);
		
		\node[
		lower,
		minimum width=61mm,
		anchor=south west
		] (low) at (0.35,0.25) {};
		
		\node[
		upper,
		minimum width=58mm,
		anchor=south west
		] (high) at (6.45,0.25) {};
		
		\node[font=\scriptsize] at (3.4,1.56)
		{distances in $[0,R]$};
		
		\node at (3.4,1.10)
		{$\zeta=\tau+\zeta'$};
		
		\node[font=\scriptsize] at (3.4,0.66)
		{$\tau=\textstyle\sum_{k=1}^N p_k^2\nu_k$,
			$\zeta'=\zeta-\tau$};
		
		\node[font=\scriptsize] at (9.35,1.56)
		{distances in $[R,2R]$};
		
		\node at (9.35,1.02)
		{$\eta=\textstyle\sum_{k=1}^N c_k\eta_k$};
		
		\draw (0.35,0.25) -- (0.35,0.10);
		\draw (6.45,0.25) -- (6.45,0.10);
		\draw (12.25,0.25) -- (12.25,0.10);
		
		\node[below=2pt] at (0.35,0.10) {$0$};
		\node[below=2pt] at (6.45,0.10) {$R$};
		\node[below=2pt] at (12.25,0.10) {$2R$};
	\end{tikzpicture}
	
	\caption{The ordered gluing used in
		Lemma~\ref{lem:annulus-transfer}. For $x\in X_i$, the distance
		from $x$ to every point in a later component is $\phi_i(x)$.
		Pairs contained in $X_i$ contribute $p_i^2\nu_i$ to the measure
		below $R$, while pairs joining $X_i$ to a later component
		contribute $c_i\eta_i$ on $[R,2R]$, where
		$c_i=2p_i(p_\ast+\sum_{j>i}p_j)$ and
		$\eta_i=(\phi_i)_\#\mu_i$. If $\zeta'/p_\ast^2$ is realised by a terminal component $X_\ast$,
		then this component contributes $\zeta'$.}
	\label{fig:ordered-gluing}
\end{figure}
	
\begin{proof}
		Let $\alpha=\zeta(\Rp)$, so that $\eta(\Rp)=1-\alpha$.  Since
		$0\le\zeta,\eta\le\Theta$, both $\zeta$ and $\eta$ are atomless by
		Lemma~\ref{lem:division}(iii).  Since $0\in\supp\zeta$, we have
		$\alpha>0$.  If $\eta=0$, take $N=0$, $p_\ast=1$, $\tau=0$, and
		$\zeta'=\zeta$. The conclusions concerning
	the constructed measures are then immediate.  The final assertion is
	precisely its assumption in this case.  We may therefore suppose that
	$\eta\ne0$, and hence that $\alpha\in(0,1)$.
	
	Apply Lemma~\ref{lem:local} to $\zeta$ with $\varepsilon=R$, and denote
	the resulting constants by $b$ and $\rho$.  Choose $\kappa>0$ such that
	\[
	\kappa\le
	\min\left\{\gamma_0(\alpha),
	\frac{\alpha b}{2(1-\alpha)}\right\},
	\]
	where $\gamma_0(\alpha)$ is given by
	Lemma~\ref{lem:mass-selection}.
	
	Partition $[R,2R]$ into finitely many intervals of length at most
	$\rho$.  Consider one such interval of positive $\eta$-mass $m$ and put
	$q=\lceil m/\kappa\rceil$.  Since $\eta$ is atomless, the distribution
	function of its restriction to this interval is continuous.  It
	therefore divides the interval into $q$ consecutive  intervals of
	equal mass $m/q\le\kappa$.  Applying this construction to every
	positive-mass interval and discarding the intervals of zero mass gives
	finitely many intervals $I_1,\ldots,I_N$.
	
	Put $J_i=\overline{I_i}$, $c_i=\eta(I_i)$, and
	$\eta_i=\eta|_{I_i}/c_i$.  Since endpoints have zero $\eta$-mass, we
	may regard $\eta_i$ as an atomless probability measure on $J_i$.  The
	construction gives
	\[
	\eta=\sum_{i=1}^N c_i\eta_i,\qquad
	0<c_i\le\kappa,\qquad
	J_i\subseteq[R,2R],\qquad
	\diam J_i\le\rho.
	\]
	
	Apply Lemma~\ref{lem:mass-selection} to the sequence
	$c_1,\ldots,c_N$ in reverse order.  This gives the weights
	$p_1,\ldots,p_N,p_\ast$ and the identities stated in the lemma.  Its
	estimate yields
	\[
	\delta=\sum_{i=1}^N p_i^2
	\le\frac{(1-\alpha)\kappa}{\alpha}
	\le\frac b2.
	\]
	
	For $i=1,\ldots,N$, set
	$\zeta_i=p_i^2\zeta/(2\delta)$.  Since $\delta>0$, these measures are
	well defined, and
	$\sum_{i=1}^N\zeta_i=\zeta/2$.  By
	Corollary~\ref{cor:homogeneity}, the constants associated with
	$\zeta_i$ are $p_i^2b/(2\delta)$ and $\rho$.  The inequality
	$\delta\le b/2$ implies
	$p_i^2\le p_i^2b/(2\delta)$.  We may therefore apply
	Corollary~\ref{cor:compact-block} with coefficient $p_i^2$.  It gives
	a compact metric probability space $(X_i,d_i,\mu_i)$ and a
	$1$-Lipschitz map $\phi_i:X_i\to J_i$ such that
	\[
	(\phi_i)_\#\mu_i=\eta_i,\qquad
	\diam X_i<R,\qquad
	p_i^2\nu_i\le\zeta_i.
	\]
	Summing the last measure inequalities gives
	$\tau\le\sum_{i=1}^N\zeta_i=\zeta/2$.  Hence
	$\zeta'=\zeta-\tau$ is nonnegative and satisfies
	$\zeta'\ge\zeta/2$.  Since $\zeta'\le\zeta$, we also have
	$\supp\zeta'\subseteq[0,R]$, and
	Lemma~\ref{lem:division}(iii) shows that $\zeta'$ is atomless.
	Moreover, every neighbourhood of the origin has positive
	$\zeta'$-mass because it has positive $\zeta$-mass.  Thus
	$0\in\supp\zeta'$.  Finally, the mass identity in
	Lemma~\ref{lem:mass-selection} gives
	$\zeta'(\Rp)=\alpha-\delta=p_\ast^2$.
	
	Assume now that $\zeta'/p_\ast^2$ has a compact realisation
	$(X_\ast,d_\ast,\mu_\ast)$ of diameter at most $R$.  On the disjoint
	union $X=X_1\sqcup\ldots\sqcup X_N\sqcup X_\ast$
	retain the internal metrics and set
	$d(x,y)=d(y,x)=\phi_i(x)$ whenever $x\in X_i$ and $y$ belongs to a
	later component.  Lemma~\ref{lem:annular-amalgamation} shows that $d$
	is a metric.
	Distances between distinct components are at least $R$.  Each
	component is therefore open and Borel in $(X,d)$.  Consequently,
	$\mu=\sum_{i=1}^N p_i\mu_i+p_\ast\mu_\ast$ is a Borel probability
	measure.  Since $X$ is a finite union of compact components, it is
	compact.
	
	Let $U$ and $V$ be independent with common law $\mu$, and let
	$\nu_\ast$ denote the internal distance law of $X_\ast$. Then
	$p_\ast^2\nu_\ast=\zeta'$, and
	Lemma~\ref{lem:distance-identity}(i) gives
	\[
	\Law(d(U,V))
	=\zeta'
	+\sum_{i=1}^N p_i^2\nu_i
	+\sum_{i=1}^N c_i\eta_i
	=\zeta'+\tau+\eta
	=\Theta.
	\]
	This proves the final assertion.
\end{proof}
	
	\begin{remark}
		The construction of the components and of $\zeta'$ does not assume that
		$\zeta'/p_\ast^2$ has already been realised.  In the following proof this
		measure is carried to smaller scales, and all components are assembled
		only after the iteration.  
	\end{remark}
	
	We now prove the compactly supported case of
	Theorem~\ref{thm:main}.  This result will also be used in the general
	case to realise the bounded part that remains after the unbounded tail
	has been treated.  For a measure supported on $[0,A]$, the proof applies
	Lemma~\ref{lem:annulus-transfer} successively on the intervals
	$[2^{-n}A,2^{-n+1}A]$.
	
	\begin{theorem}
		\label{thm:compact}
		Let $\Theta$ be an atomless probability measure supported on $[0,A]$
		for some $A>0$, and assume that $0\in\supp\Theta$.  Then $\Theta$ is
		realised by a compact metric probability space of diameter at most $A$.
	\end{theorem}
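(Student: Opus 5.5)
We plan to iterate Lemma~\ref{lem:annulus-transfer} on the dyadic scales $R_n=2^{-n}A$, $n\ge1$, and then assemble all components at once as a single ordered union. Put $\Theta_0=\Theta$. At stage $n$ we hold an atomless probability measure $\Theta_{n-1}$ supported on $[0,2R_n]$ with $0\in\supp\Theta_{n-1}$. We split it as $\zeta^{(n)}=\Theta_{n-1}|_{[0,R_n)}$ and $\eta^{(n)}=\Theta_{n-1}|_{[R_n,2R_n]}$; the point $R_n$ is null, so the choice of endpoint is harmless. We apply Lemma~\ref{lem:annulus-transfer} with $R=R_n$. This yields components $X^{(n)}_1,\dots,X^{(n)}_{N_n}$, weights $p^{(n)}_i$ and $p^{(n)}_\ast$, $1$-Lipschitz maps $\phi^{(n)}_i$ into $[R_n,2R_n]$, and a remainder $\zeta'^{(n)}$. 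We then set $\Theta_n=\zeta'^{(n)}/(p^{(n)}_\ast)^2$. This is again atomless, has $0$ in its support, and is supported on $[0,R_n]=[0,2R_{n+1}]$, so the iteration continues. Note that the lemma's final clause cannot be used directly, since it needs a realisation of $\Theta_n$, which is exactly what we lack. Therefore we unfold the recursion instead.

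Put $P_n=\prod_{k\le n}p^{(k)}_\ast$. Unfolding the recursion, the candidate space is the ordered disjoint union of all $X^{(n)}_i$, ordered first by $n$ and then by $i$, together with one limit point $o$. The measure is $\mu=\sum_n P_{n-1}\sum_i p^{(n)}_i\mu^{(n)}_i$. The metric retains the internal metrics and sets $d(x,y)=\phi^{(n)}_i(x)$ for $x\in X^{(n)}_i$ and $y$ later, including $y=o$. The triangle inequality reduces to the argument of Lemma~\ref{lem:annular-amalgamation} with $R=R_n$, applied to the earliest point in a triple. Two facts are needed: all later material, including everything at deeper stages, has mutual distances at most $2R_{n+1}=R_n$, and $\diam X^{(n)}_i<R_n$. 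Points of stage $n$ lie within $2R_n$ of $o$, so the space is compact: it is a finite union of compact sets outside each ball around $o$, and completeness at $o$ is clear. The diameter is at most $2R_1=A$. The components are open, so $\mu$ is Borel.

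The main obstacle, and the key computation, is to show that $\mu$ has total mass $1$ (equivalently $P_n\to0$) and that the distance law is $\Theta$. Applying Lemma~\ref{lem:distance-identity}(i) at stage $n$ with terminal mass $P_n$ (scaling by $P_{n-1}$) and induction give
\[
\Law(d(U,V))\ \ge\ \sum_{k\le n}P_{k-1}^2\bigl(\tau^{(k)}+\eta^{(k)}\bigr),
\]
while $\Theta=\sum_{k\le n}P_{k-1}^2(\tau^{(k)}+\eta^{(k)})+P_n^2\Theta_n$. Here $P_n^2$ equals $\Theta$-mass remaining below $R_n$, and since that mass is at most $\Theta([0,R_n])\to\Theta(\{0\})=0$ by atomlessness, we get $P_n\to0$. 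Hence the weights sum to $1$, the pair mass carried by the tail vanishes, and letting $n\to\infty$ gives exactly $\Law(d(U,V))=\Theta$. For this we will verify the identity $P_n^2\Theta_n=\Theta|_{[0,R_n)}-\sum_{k\le n}P_{k-1}^2\tau^{(k)}|_{[0,R_n)}$, which bounds $P_n^2\le\Theta([0,R_n))$. The point $o$ carries no mass.
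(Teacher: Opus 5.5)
Your proposal is correct and is essentially the paper's proof. It iterates Lemma~\ref{lem:annulus-transfer} on the dyadic scales $R_n=2^{-n}A$, assembles all components in lexicographic order with a single limit point of measure zero, gets $P_n\to0$ from the bound $P_n^2\le\Theta([0,R_n))$, and identifies the law via Lemma~\ref{lem:distance-identity}(i) together with the telescoping identity $\Theta=\sum_{k\le n}P_{k-1}^2(\tau^{(k)}+\eta^{(k)})+P_n^2\Theta_n$. The differences are only bookkeeping: you renormalise the remainder at each step, and you truncate at stage $n$, treating the deeper part as a terminal block of mass $P_n$, instead of applying the lemma to the whole countable family; since the deeper part has mass exactly $P_n$ only once $P_m\to0$ is known, present your independent measure identity first.
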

	
	\begin{proof}
		The proof proceeds from larger to smaller distance scales.  At each
		step, Lemma~\ref{lem:annulus-transfer} is applied to the next interval.
		It produces finitely many compact components and leaves a measure
		supported closer to the origin for the following step.  When the
		components are inserted into the final space, their masses are
		rescaled so that the total mass sums to one.
		
		Put $R_n=2^{-n}A$, $n=0,1,2,\ldots$.  We construct finite measures $\Lambda_n$
		recursively.  Start with $\Lambda_0=\Theta$, $M_0=1$, and $P_0=1$.
		Suppose that $\Lambda_{n-1}$ is atomless, supported on
		$[0,R_{n-1}]$, and has the origin in its support.  Let
		\[
		M_{n-1}=\Lambda_{n-1}(\Rp),\qquad
		\theta_{n-1}=\frac{\Lambda_{n-1}}{M_{n-1}},
		\]
		and split
		\[
		\theta_{n-1}=\zeta_n+\eta_n,\qquad
		\zeta_n=\theta_{n-1}|_{[0,R_n]},\qquad
		\eta_n=\theta_{n-1}|_{(R_n,R_{n-1}]}.
		\]
		The measure $\zeta_n$ is nonzero and still has the origin in its
		support.
		
		If $\eta_n=0$, put $N_n=0$, $p_{n,\ast}=1$, $\tau_n=0$, and
		$\zeta_n'=\zeta_n$.  Otherwise, Lemma~\ref{lem:annulus-transfer}
		produces an integer $N_n\ge1$, compact metric probability spaces
		$(X_{n,i},d_{n,i},\mu_{n,i})$, $i=1,\ldots,N_n$, maps
		$\phi_{n,i}:X_{n,i}\to[R_n,R_{n-1}]$, and positive weights
		$p_{n,1},\ldots,p_{n,N_n},p_{n,\ast}$.  In either case,
		\[
		p_{n,\ast}+\sum_{i=1}^{N_n}p_{n,i}=1.
		\]
		For $i=1,\ldots,N_n$, let
		$\eta_{n,i}=(\phi_{n,i})_\#\mu_{n,i}$ and
		$\nu_{n,i}=\Law(d_{n,i}(U_{n,i},V_{n,i}))$, where
		$U_{n,i},V_{n,i}\sim\mu_{n,i}$ are independent.  The conclusions of Lemma~\ref{lem:annulus-transfer}, together with the
		conventions for the case $\eta_n=0$, give
		\[
		\eta_n=\sum_{i=1}^{N_n}c_{n,i}\eta_{n,i},\qquad
		c_{n,i}
		=2p_{n,i}\Big(
		p_{n,\ast}+\sum_{j=i+1}^{N_n}p_{n,j}
		\Big),
		\]
		and
		\[
		\tau_n=\sum_{i=1}^{N_n}p_{n,i}^2\nu_{n,i},\qquad
		\zeta_n'=\zeta_n-\tau_n\ge\frac12\zeta_n,\qquad
		\zeta_n'(\Rp)=p_{n,\ast}^2.
		\]
		
		Define
		\[
		\Lambda_n=M_{n-1}\zeta_n',\qquad
		M_n=\Lambda_n(\Rp)=M_{n-1}p_{n,\ast}^2.
		\]
		The measure $\Lambda_n$ is atomless by
		Lemma~\ref{lem:division}(iii), is supported on $[0,R_n]$, and has the
		origin in its support.  Moreover, induction gives
		\[
		\Lambda_n
		=M_{n-1}(\zeta_n-\tau_n)
		\le M_{n-1}\zeta_n
		=\Lambda_{n-1}|_{[0,R_n]}
		\le\Theta|_{[0,R_n]}.
		\]
		Since $\Theta(\{0\})=0$, continuity from above yields
		$M_n\le\Theta([0,R_n])\to0$ as $n\to\infty$.
		
		Set $P_n=\sqrt{M_n}$ and
		$w_{n,i}=P_{n-1}p_{n,i}$ for $i=1,\ldots,N_n$.  Then
		$P_n=P_{n-1}p_{n,\ast}$ and
		\begin{equation}\label{eq:RekursionVonPn}
		P_n+\sum_{i=1}^{N_n}w_{n,i}=P_{n-1}.
		\end{equation}
		Summing \eqref{eq:RekursionVonPn} over $n=1,\ldots,K$ gives
		\[
		\sum_{n=1}^K\sum_{i=1}^{N_n}w_{n,i}=P_0-P_K=1-P_K.
		\]
		Since $P_K\to0$ as $K\to\infty$, it follows that
		$\sum_{n=1}^{\infty}\sum_{i=1}^{N_n}w_{n,i}=1$.
		
		We now assemble all components.  Add a point $\omega$ which does not
		belong to any of them and which will serve as their common limit as
		$n\to\infty$.  Let
		\[
		X=\{\omega\}\sqcup
		\bigsqcup_{n=1}^{\infty}
		\bigsqcup_{i=1}^{N_n}X_{n,i}.
		\]
		Order the components first by $n$ and then by $i$, and place $\omega$
		after all of them.  Recall that, for $n\ge1$ and
		$i=1,\ldots,N_n$, the map
		$\phi_{n,i}:X_{n,i}\to[R_n,R_{n-1}]$ is $1$-Lipschitz.  Retain the
		internal metrics and set $d(\omega,\omega)=0$.  If $x\in X_{n,i}$ and
		$y$ belongs to a later component or equals $\omega$, set
		\[
		d(x,y)=d(y,x)=\phi_{n,i}(x).
		\]
		
		We verify that $d$ is a metric.  Suppose first that $x,x'$ belong to
		the same component and $y$ belongs to a later component.  The reverse
		triangle inequality follows from the fact that $\phi_{n,i}$ is
		$1$-Lipschitz.  The remaining inequality follows from
		\[
		d_{n,i}(x,x')\le\diam X_{n,i}\le R_n
		\le d(x,y)+d(x',y).
		\]
		If $y$ belongs to an earlier component, its distances to $x$ and $x'$
		are equal, and the same diameter estimate applies.
		
		Now consider three points belonging to three different sets among the
		components and $\{\omega\}$, and let $x\in X_{n,i}$ belong to the
		earliest one.  The two distances from $x$ are equal and lie in
		$[R_n,R_{n-1}]$.  The distance between the two later points is
		determined by the earlier of them and is at most
		$R_{n-1}=2R_n$.  Hence all triangle inequalities hold.
		
		Each component $X_{n,i}$ is open, since its distance from its
		complement is at least $R_n>0$, and is therefore Borel. The singleton
		$\{\omega\}$ is also Borel.  Thus
		\[
		\mu=\sum_{n=1}^{\infty}
		\sum_{i=1}^{N_n}w_{n,i}\mu_{n,i},
		\qquad
		\mu(\{\omega\})=0,
		\]
		is a Borel probability measure.
		
		The space $(X,d)$ is totally bounded.  Given $\varepsilon>0$, choose $n_0$
		such that $R_{n_0-1}<\varepsilon$.  Every component of level
		$n\ge n_0$ lies in $B(\omega,\varepsilon)$.  The components with
		$n<n_0$ form a finite union of compact spaces and are therefore
		totally bounded.
		
		The space is also complete.  Let $(x_k)_{k\ge1}$ be a Cauchy sequence in $X$.
		If infinitely many terms are equal to $\omega$, then the corresponding
		constant subsequence converges to $\omega$, and the Cauchy property
		implies convergence of the entire sequence.  Otherwise, discard the
		finitely many terms equal to $\omega$ and denote the level of $x_k$ by
		$n_k$.  If $n_k\to\infty$, then $d(x_k,\omega)\le R_{n_k-1}\to 0$.
		If the levels do not tend to infinity, there is an $L\ge1$ such that
		infinitely many terms belong to levels $1,\ldots,L$.  These levels
		contain only finitely many components, so one compact component
		contains a subsequence.  This subsequence has a convergent further
		subsequence, and the Cauchy property implies convergence of the entire
		sequence to the same limit.  Thus $X$ is complete and, being totally
		bounded, compact.
		
		\begin{figure}[t]
			\centering
			\begin{tikzpicture}[
				every node/.style={font=\small},
				residual/.style={
					draw,
					rounded corners=2pt,
					minimum width=13mm,
					minimum height=7mm,
					fill=black!7,
					line width=0.6pt
				},
				contribution/.style={
					draw,
					rounded corners=2pt,
					minimum width=41mm,
					minimum height=12mm,
					fill=black!14,
					line width=0.6pt,
					align=center,
					font=\scriptsize
				},
				component/.style={
					draw,
					rounded corners=2pt,
					minimum width=10mm,
					minimum height=8mm,
					fill=white,
					line width=0.55pt
				},
				arrow/.style={-{Stealth[length=2mm]},semithick},
				carry/.style={
					-{Stealth[length=2mm]},
					densely dashed,
					line width=0.6pt
				},
				scalelabel/.style={
					font=\scriptsize,
					anchor=west,
					align=left
				}
				]
				\node[anchor=west] at (0,7.15) {measure recursion};
				\node[anchor=west] at (6.65,7.15)
				{components of the realising space};
				\draw[black!35,line width=0.5pt]
				(6.32,0.55) -- (6.32,7.32);
				
				\node[residual] (lambda0) at (0.75,6.15) {$\Lambda_0$};
				\node[contribution] (cont1) at (3.75,6.15)
				{\begin{tabular}{@{}l@{\quad}l@{}}
					level $1$        & \\[-1pt]
					different comp. & $M_0\eta_1$ \\[-1pt]
					same comp.      & $M_0\tau_1$
				\end{tabular}};
				\draw[arrow] (lambda0.east) -- (cont1.west);
				
				\node[residual] (lambda1) at (0.75,4.55) {$\Lambda_1$};
				\node[contribution] (cont2) at (3.75,4.55)
				{\begin{tabular}{@{}l@{\quad}l@{}}
					level $2$        & \\[-1pt]
					different comp. & $M_1\eta_2$ \\[-1pt]
					same comp.      & $M_1\tau_2$
				\end{tabular}};
				\draw[carry] (lambda0.south) -- (lambda1.north);
				\draw[arrow] (lambda1.east) -- (cont2.west);
				
				\node at (0.75,3.35) {$\vdots$};
				\node at (3.75,3.35) {$\vdots$};
				
				\node[residual] (lambdanminus) at (0.75,2.15)
				{$\Lambda_{n-1}$};
				\node[contribution] (contn) at (3.75,2.15)
				{\begin{tabular}{@{}l@{\quad}l@{}}
					level $n$        & \\[-1pt]
					different comp. & $M_{n-1}\eta_n$ \\[-1pt]
					same comp.      & $M_{n-1}\tau_n$
				\end{tabular}};
				\draw[carry] (lambda1.south) -- (0.75,3.70);
				\draw[carry] (0.75,3.00) -- (lambdanminus.north);
				\draw[arrow] (lambdanminus.east) -- (contn.west);
				
				\node[residual] (lambdan) at (0.75,0.82) {$\Lambda_n$};
				\draw[carry] (lambdanminus.south) -- (lambdan.north);
				\node[font=\scriptsize] at (2.75,0.82)
				{$M_n=P_n^2\longrightarrow0$};
				
				\node[component] at (7.35,6.15) {$X_{1,1}$};
				\node at (8.25,6.15) {$\cdots$};
				\node[component] at (9.20,6.15) {$X_{1,N_1}$};
				\node[scalelabel] at (9.95,6.15)
				{masses $w_{1,i}$\\distances in $[R_1,R_0]$};
				
				\node[component,minimum width=9mm] at (7.35,4.55)
				{$X_{2,1}$};
				\node at (8.25,4.55) {$\cdots$};
				\node[component,minimum width=9mm] at (9.20,4.55)
				{$X_{2,N_2}$};
				\node[scalelabel] at (9.95,4.55)
				{masses $w_{2,i}$\\distances in $[R_2,R_1]$};
				
				\node at (8.25,3.35) {$\vdots$};
				\node[scalelabel] at (9.95,3.35) {$R_n\downarrow0$};
				
				\node[component,minimum width=8mm] at (7.35,2.15)
				{$X_{n,1}$};
				\node at (8.25,2.15) {$\cdots$};
				\node[component,minimum width=8mm] at (9.20,2.15)
				{$X_{n,N_n}$};
				\node[scalelabel] at (9.95,2.15)
				{masses $w_{n,i}$\\distances in $[R_n,R_{n-1}]$};
				
				\node at (8.25,1.35) {$\vdots$};
				\fill (8.25,0.88) circle (1.7pt);
				\node[right=3pt] at (8.25,0.88) {$\omega$};
				\node[font=\scriptsize,anchor=west] at (8.90,0.88)
				{common limit, $\mu(\{\omega\})=0$};
				
				\node[font=\scriptsize,align=center] at (8.25,0.25)
				{$\displaystyle
					P_n+\sum_{i=1}^{N_n}w_{n,i}=P_{n-1}$};
			\end{tikzpicture}
			\caption{The iteration in the proof of
				Theorem~\ref{thm:compact}.  At level $n$, pairs in different
				components whose earlier component has level $n$ contribute
				$M_{n-1}\eta_n$, while pairs within the level-$n$ components
				contribute $M_{n-1}\tau_n$.  The remaining measure $\Lambda_n$
				is carried to the next scale.  The components of level $n$ have
				masses $w_{n,i}$, and their distances to later components lie in
				$[R_n,R_{n-1}]$.  Since $R_n\to0$, the levels accumulate at
				the point $\omega$.}
			\label{fig:compact-iteration}
		\end{figure}
		
		It remains to identify the distance law.  Let $U$ and $V$ be
		independent with common law $\mu$.  Since
		$\Lambda_{n-1}=M_{n-1}\theta_{n-1}$,
		$\theta_{n-1}=\zeta_n+\eta_n$, and
		$\zeta_n=\zeta_n'+\tau_n$, we have
		\[
		\Lambda_{n-1}
		=M_{n-1}\eta_n+M_{n-1}\tau_n+\Lambda_n.
		\]
		Figure~\ref{fig:compact-iteration} summarises the measure recursion and
		the corresponding arrangement of the components.
		Iterating this identity and using
		$\Lambda_n(\Rp)=M_n\to0$ as $n\to\infty$ yields
		\begin{equation}\label{eq:KompaktesMassZerlegung}
		\Theta
		=\sum_{n=1}^{\infty}M_{n-1}\eta_n
		+\sum_{n=1}^{\infty}M_{n-1}\tau_n.
		\end{equation}
		The internal contribution of the level-$n$ components is
		\[
		\sum_{i=1}^{N_n}w_{n,i}^2\nu_{n,i}
		=M_{n-1}\tau_n.
		\]
		The same telescoping argument shows that the total mass of all
		components at levels $m>n$ is $P_n$.  Hence, for a fixed component
		$X_{n,i}$, the total mass in all later components is
		\[
		\begin{aligned}
			\sum_{j=i+1}^{N_n}w_{n,j}
			+\sum_{m=n+1}^{\infty}\sum_{j=1}^{N_m}w_{m,j}
			&=P_{n-1}\sum_{j=i+1}^{N_n}p_{n,j}+P_n=P_{n-1}\Big(
			p_{n,\ast}+\sum_{j=i+1}^{N_n}p_{n,j}
			\Big),
		\end{aligned}
		\]
		where we used \eqref{eq:RekursionVonPn}.
		Enumerate the nonempty components $X_{n,i}$ in lexicographic order.
		Their weights sum to one, so Lemma~\ref{lem:distance-identity}(i),
		with no terminal component, applies to this ordered family.  The point
		$\omega$ has $\mu$-measure zero and therefore does not contribute to
		the distance law.
		Lemma~\ref{lem:distance-identity}(i), applied with this ordering,
		therefore shows that the contribution from pairs in different
		components whose earlier component has level $n$ is
		\[
		M_{n-1}\sum_{i=1}^{N_n}
		2p_{n,i}\Big(
		p_{n,\ast}+\sum_{j=i+1}^{N_n}p_{n,j}
		\Big)\eta_{n,i}
		=M_{n-1}\eta_n.
		\]
		The internal contributions
		and those from pairs in different components therefore agree with the
		two terms in \eqref{eq:KompaktesMassZerlegung}.  Hence
		$\Law(d(U,V))=\Theta$. 
		
		Finally, every internal distance in a component of level $n$ is at
		most $R_n\le A$.  If the earlier of two distinct components has level
		$n$, then their mutual distance belongs to
		$[R_n,R_{n-1}]\subseteq[0,A]$.  The same holds when the later point is
		$\omega$.  Hence $\diam X\le A$.
	\end{proof}
	
\section{The general case}\label{sec:general-case}

It remains to consider atomless laws with unbounded support.  We first
divide the part outside a bounded interval into probability measures
supported on short, ordered intervals.  The corresponding components
will be placed after a compact core in the same order.  The fact that
these intervals move to infinity will ensure completeness of the
resulting space.

\begin{lemma}
	\label{lem:tail-partition}
	Fix $A>0$, and let $\eta$ be a finite atomless Borel measure on $(A,\infty)$ with
	unbounded support.  Given $\rho,\gamma>0$, there are pairwise disjoint
	intervals $B_i$, $i=1,2,\ldots$, which cover $(A,\infty)$ up to an
	$\eta$-null set and whose closures $J_i=\overline{B_i}$ in $\R$ satisfy
	\[
	0<\eta(B_i)\le\gamma,\qquad
	\diam J_i\le\rho,\qquad
	\sup J_i\le\inf J_j\quad\text{for }1\le i<j,
	\]
	and $\inf J_i\to\infty$ as $i\to\infty$.
\end{lemma}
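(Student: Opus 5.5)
I will build the intervals in two stages: first a coarse partition of $(A,\infty)$ into consecutive unit-scale pieces, then a finite subdivision of each piece. For $k\ge0$ put $G_k=(A+k\rho,A+(k+1)\rho]$. These half-open intervals are pairwise disjoint, cover $(A,\infty)$, each has closure of diameter $\rho$, and $\sup\overline{G_k}=\inf\overline{G_{k+1}}$. Discard every $G_k$ with $\eta(G_k)=0$. Their union is $\eta$-null, so the remaining pieces still cover $(A,\infty)$ up to an $\eta$-null set. Since $\eta$ has unbounded support, $\eta((t,\infty))>0$ for every $t$. Hence infinitely many $G_k$ have positive mass, and after the discarding we are left with an infinite increasing sequence $k_1<k_2<\cdots$.

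Next I subdivide each retained $G_k$ exactly as in the proof of Lemma~\ref{lem:annulus-transfer}. Let $m_k=\eta(G_k)>0$ and $q_k=\lceil m_k/\gamma\rceil$. The function $t\mapsto\eta(G_k\cap(-\infty,t])$ is continuous and nondecreasing, because $\eta$ is atomless. By the intermediate value theorem it yields points $A+k\rho=t_0\le t_1\le\cdots\le t_{q_k}=A+(k+1)\rho$ such that each half-open interval $(t_{l-1},t_l]$ has mass $m_k/q_k\le\gamma$. This mass is positive, so each subinterval is nondegenerate and its closure $[t_{l-1},t_l]$ has diameter at most $\rho$. These intervals are disjoint and consecutive, and the closure of each lies inside $\overline{G_k}$.

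Finally I enumerate all subintervals as $B_1,B_2,\ldots$. The ordering is first by $k$ (increasing over the retained indices) and then by $l$. Each group is finite and there are infinitely many groups, so the enumeration is indexed by $\mathbb N$ and every interval receives a finite index. The required properties then follow directly:
\begin{itemize}
\item Disjointness and the cover up to a null set are inherited from the two stages.
\item The bounds $0<\eta(B_i)\le\gamma$ and $\diam J_i\le\rho$ hold by construction.
\item The ordering $\sup J_i\le\inf J_j$ for $i<j$ holds within a group because the subintervals are consecutive. Across groups it holds because $\sup\overline{G_k}\le\inf\overline{G_{k'}}$ for $k<k'$.
\item For $\inf J_i\to\infty$: an interval in group $k$ has $\inf J_i\ge A+k\rho$. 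Each group is finite, so the group index tends to infinity along the enumeration.
\end{itemize}

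The step I expect to need the most care is not any estimate. It is checking that the enumeration is genuinely indexed by all of $\mathbb N$ with intervals marching to infinity. This uses both the unbounded support, which gives infinitely many nonempty groups, and the finiteness of each $q_k$, which keeps each group finite and so lets the index reach every group.
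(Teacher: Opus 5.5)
Your proposal is correct and follows essentially the same route as the paper: consecutive half-open blocks, discarding the null blocks, splitting each remaining block into finitely many consecutive pieces of equal mass at most $\gamma$ via the continuous distribution function, and enumerating in increasing order. The only difference is cosmetic: the paper uses blocks of length $\rho/2$ rather than $\rho$, which is not needed for the bound $\diam J_i\le\rho$.
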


\begin{proof}
	Partition $(A,\infty)$ into the consecutive half-open intervals
	\[
	H_k=(A+k\rho/2,A+(k+1)\rho/2],
	\qquad k=0,1,\ldots.
	\]
	Their endpoints have zero $\eta$-measure.  If
	$m_k=\eta(H_k)>0$, put $N_k=\lceil m_k/\gamma\rceil$.  Write
	$H_k=(a_k,b_k]$ and set $t_{k,0}=a_k$ and
	$t_{k,N_k}=b_k$.  Since $\eta$ is atomless, the distribution
	function of $\eta|_{H_k}$ is continuous.  Hence, for
	$\ell=1,\ldots,N_k-1$, there is a point $t_{k,\ell}$ such that
	\[
	\eta\bigl(H_k\cap(-\infty,t_{k,\ell}]\bigr)
	=\frac{\ell m_k}{N_k}.
	\]
	The intervals
	$(t_{k,\ell-1},t_{k,\ell}]$, $\ell=1,\ldots,N_k$, are consecutive,
	have mass $m_k/N_k\le\gamma$, and cover $H_k$.  Thus each $H_k$
	produces only finitely many intervals.  Discard the intervals $H_k$
	of zero mass and enumerate all remaining subintervals in increasing
	order as $B_1,B_2,\ldots$.
	
	If $B_i\subseteq H_k$, then
	$J_i\subseteq\overline{H_k}$ and hence
	$\diam J_i\le\rho/2\le\rho$.  The chosen enumeration gives
	$\sup J_i\le\inf J_j$ whenever $i<j$.  Since $\eta$ has unbounded
	support, intervals $H_k$ of positive mass occur for arbitrarily large
	$k$.  Each $H_k$ contributes only finitely many intervals, so the
	index $k$ for which $B_i\subseteq H_k$ tends to infinity with $i$.
	Consequently, $\inf J_i\to\infty$.
\end{proof}

\begin{proof}[Proof of Theorem~\ref{thm:main}]
	Necessity of $0\in\supp\Theta$ was proved in the introduction.  We
	prove sufficiency.  If $\Theta$ has bounded support, the claim follows
	from Theorem~\ref{thm:compact}, so we can assume that the support of $\Theta$
	is unbounded.
	
	Fix $A>0$ and write
	\[
	\zeta=\Theta|_{[0,A]},\qquad
	\eta=\Theta|_{(A,\infty)},\qquad
	\alpha=\zeta(\Rp).
	\]
	Since $0\le\zeta,\eta\le\Theta$, both $\zeta$ and $\eta$ are atomless
	by Lemma~\ref{lem:division}(iii).
	The support assumptions imply that $0<\alpha<1$.  Apply
	Lemma~\ref{lem:local} to $\zeta$ with $\varepsilon=A$, and denote the
	resulting constants by $b$ and $\rho$.
	
	Choose $\gamma>0$ such that
	\[
	\gamma\le
	\min\Big\{\gamma_0(\alpha),
	\frac{\alpha b}{2(1-\alpha)}\Big\}.
	\]
	Apply Lemma~\ref{lem:tail-partition} to $\eta$ with parameters
	$\rho$ and $\gamma$, and let $B_i$, $i=1,2,\ldots$, be the intervals
	provided by the lemma.  Write $J_i=\overline{B_i}$ and put
	\[
	c_i=\eta(B_i),\qquad
	\eta_i=\frac{\eta|_{B_i}}{c_i},
	\qquad i=1,2,\ldots.
	\] Since the endpoints of $B_i$ have zero
	$\eta$-measure, we may regard $\eta_i$ as an atomless probability
	measure on the compact interval $J_i$.  Moreover,
	\[
	\eta=\sum_{i=1}^{\infty}c_i\eta_i.
	\]
	Lemma~\ref{lem:mass-selection} gives positive numbers
	$p_0,p_1,p_2,\ldots$ satisfying
	\[
	p_0+\sum_{i=1}^{\infty}p_i=1,\qquad
	c_i=2p_i\Big(
	p_0+\sum_{j=1}^{i-1}p_j
	\Big),\qquad
	p_0^2+\sum_{i=1}^{\infty}p_i^2=\alpha.
	\]
	Its estimate gives
	\[
	\delta:=\sum_{i=1}^{\infty}p_i^2
	\le\frac{(1-\alpha)\gamma}{\alpha}
	\le\frac b2.
	\]
	
	For $i=1,2,\ldots$, put
	$\zeta_i=p_i^2\zeta/(2\delta)$.  Then
	$\sum_{i=1}^{\infty}\zeta_i=\zeta/2$.  By
	Corollary~\ref{cor:homogeneity}, Lemma~\ref{lem:local} applies to
	$\zeta_i$ with $p_i^2b/(2\delta)$ in place of $b$ and with $\rho$
	unchanged.  Since $\delta\le b/2$, we have $p_i^2\le\frac{p_i^2b}{2\delta}$.
	Corollary~\ref{cor:compact-block} therefore gives a compact metric
	probability space
	$(X_i,d_i,\mu_i)$ and a $1$-Lipschitz map
	$\phi_i:X_i\to J_i$ such that
	\[
	(\phi_i)_\#\mu_i=\eta_i,\qquad
	\diam X_i<A,\qquad
	p_i^2\nu_i\le\zeta_i,
	\]
	where $\nu_i$ is the internal distance law of $X_i$.  Consequently,
	\[
	\tau:=\sum_{i=1}^{\infty}p_i^2\nu_i
	\le\sum_{i=1}^{\infty}\zeta_i
	=\frac12\zeta.
	\]
	
	Put $\zeta'=\zeta-\tau$.  Since $0\le\zeta'\le\zeta$,
	Lemma~\ref{lem:division}(iii) shows that $\zeta'$ is atomless.
	Moreover, $\zeta'$ is supported on $[0,A]$ and satisfies
	$\zeta'\ge\zeta/2$.  It therefore has the origin in its support.
	The mass identity from Lemma~\ref{lem:mass-selection} gives
	$\zeta'(\Rp)=\alpha-\delta=p_0^2$.  By
	Theorem~\ref{thm:compact}, there is a compact metric probability
	space $(X_0,d_0,\mu_0)$ of diameter at most $A$ whose distance law is
	$\zeta'/p_0^2$.
	
	Set
	\[
	X=X_0\sqcup\bigsqcup_{i=1}^{\infty}X_i,\qquad
	\mu=p_0\mu_0+\sum_{i=1}^{\infty}p_i\mu_i,
	\]
	and retain the internal metrics.  For $i\ge1$, $x\in X_i$, and
	$y\in\bigsqcup_{j=0}^{i-1}X_j$, define
	\[
	d(x,y)=d(y,x)=\phi_i(x).
	\]
	
	We verify that $d$ is a metric on $X$.  Suppose first that two points
	$x,x'$ belong to the same component $X_i$ with $i\ge1$.  If the third
	point belongs to an earlier component, the reverse triangle
	inequality follows from the fact that $\phi_i$ is $1$-Lipschitz.  If
	it belongs to a later component, the two cross-distances are equal.
	The remaining inequality follows in both cases from
	\[
	d_i(x,x')\le\diam X_i\le A
	\le d(x,y)+d(x',y),
	\]
	because every distance between distinct components is at least $A$.
	The same argument applies when $x,x'\in X_0$.
	
	Now consider points $x_i\in X_i$, $x_j\in X_j$, and $x_k\in X_k$
	with $i>j>k\ge0$.  The two distances from $x_i$ are equal to
	$\phi_i(x_i)$, while
	\[
	d(x_j,x_k)=\phi_j(x_j)
	\le\sup J_j
	\le\inf J_i
	\le\phi_i(x_i).
	\]
	Hence all triangle inequalities hold.
	
	Every component is at distance at least $A$ from its complement and
	is therefore open and Borel.  It follows that $\mu$ is a Borel
	probability measure.  The space is separable because the union of
	countable dense subsets of its components is countable and dense in
	$X$.
	
	To prove completeness, let $(x_n)_{n\ge1}$ be a Cauchy sequence in $X$.
	Choose $n_0$ such that $d(x_n,x_m)<1$ for all $n,m\ge n_0$, and let
	$r\ge0$ be the index of the component containing $x_{n_0}$.  Since
	$\inf J_i\to\infty$ as $i\to\infty$, there is an $N>r$ such that
	$\inf J_i>1$ for every $i\ge N$.  No term $x_n$ with $n\ge n_0$
	can belong to such a component $X_i$, since its distance from
	$x_{n_0}$ would be greater than $1$.  The tail of the sequence is
	therefore contained in the finite union
	$X_0\sqcup\ldots\sqcup X_{N-1}$.  Distinct components are at distance
	at least $A>0$, so the sequence is eventually contained in one of
	them.  It converges there because every component is compact.
	
	Finally, let $U$ and $V$ be independent with common law $\mu$.
	Lemma~\ref{lem:distance-identity}(ii) gives
	\[
	\begin{aligned}
		\Law(d(U,V))
		&=\zeta'
		+\sum_{i=1}^{\infty}p_i^2\nu_i
		+\sum_{i=1}^{\infty}
		2p_i\Big(
		p_0+\sum_{j=1}^{i-1}p_j
		\Big)\eta_i\\
		&=\zeta'+\tau
		+\sum_{i=1}^{\infty}c_i\eta_i\\
		&=\zeta+\eta
		=\Theta.
	\end{aligned}
	\]
	This proves sufficiency and completes the proof.
\end{proof}

\begin{proof}[Proof of Corollary~\ref{cor:abc}]
	Every absolutely continuous probability measure is atomless, so the
	claim follows from Theorem~\ref{thm:main}.
\end{proof}

\section{Geometry of the realising spaces}
\label{sec:geometry}

The preceding constructions also determine the topology of the
realising spaces and show that their metrics may be chosen arbitrarily
close to ultrametrics in the multiplicative sense made precise below.
Two metrics $d$ and $u$ on the same set $S$ are called
$L$-bi-Lipschitz equivalent, where $L\geq1$, if
\[
L^{-1}u(x,y)\leq d(x,y)\leq Lu(x,y),
\qquad x,y\in S.
\]
Recall that a topological space is zero-dimensional if it has a basis
of sets that are both open and closed, and perfect if it has no
isolated points.  Every ultrametric space is zero-dimensional.  A
metric space is proper if every closed bounded subset is compact.  We
refer to \cite[Chapter~1]{BuragoBuragoIvanov2001} for the metric space
terminology.  The Cantor space is the product space
$\{0,1\}^{\mathbb N}$.  By Brouwer's characterisation theorem
\cite[Theorem~7.4]{Kechris1995}, every nonempty perfect compact
metrizable zero-dimensional space is homeomorphic to the Cantor space.

\begin{corollary}
	\label{cor:geometry}
	Let $\Theta$ be an atomless Borel probability measure on $\Rp$ such
	that $0\in\supp\Theta$.  For every $L>1$, there is a realisation
	$(S,d,\mu)$ of $\Theta$ such that $\supp\mu=S$ and $d$ is
	$L$-bi-Lipschitz equivalent to an ultrametric $u$ on $S$.  In fact,
	\[
	u(x,y)\leq d(x,y)\leq Lu(x,y),
	\qquad x,y\in S.
	\]
	If $\Theta$ has bounded support, then $S$ may be chosen compact and
	homeomorphic to the Cantor space.  In this case,
	\[
	\{d(x,y):x,y\in S\}=\supp\Theta,
	\qquad
	\diam S=\max\supp\Theta.
	\]
	If $\Theta$ has unbounded support, then $S$ may be chosen proper and
	homeomorphic to a countable disjoint union of Cantor spaces.
\end{corollary}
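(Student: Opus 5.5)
The plan is to rerun the constructions of Sections~\ref{sec:local-construction}--\ref{sec:general-case} with every dyadic ratio replaced by a ratio $\lambda\in(1,\min\{L,2\}]$. Then every distance produced by the construction lies in an interval $[s,\lambda s]$, where the scale $s$ is determined only by the combinatorial ``first separation'' of the two points, and we take $u(x,y)=s$. Since proving the statement for smaller $L$ is stronger, we may assume $L\le2$ and put $\lambda=L$.

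\textbf{Changes to the construction.}
\begin{enumerate}[label=\textnormal{(\arabic*)}]
\item In Lemma~\ref{lem:local}, choose the scales so that $[a_{n+1},La_{n+1}]\subseteq(0,a_n/2]$ and $\zeta([a_n,La_n])>0$. This is possible by the same pigeonhole argument applied to the intervals $(L^{-k-1}r,L^{-k}r]$. The intervals $I_n\subseteq[a_n,La_n]$ are then chosen with diameter at most $a_{n+1}$, exactly as before. The triangle-inequality argument carries over unchanged, because it only used $d(x,z)\le La_m\le 2a_m\le a_n\le d(x,y)$ together with the Lipschitz-type bounds of diameter at most $a_{m}$.
\item In Lemma~\ref{lem:annular-amalgamation} and in Theorem~\ref{thm:compact}, use $R_n=L^{-n}A$ and maps $\phi_i$ into $[R,LR]$. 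The bound $d(y,z)\le LR\le 2R\le d(y,x)+d(x,z)$ still holds, and the interval-splitting in Lemma~\ref{lem:annulus-transfer} is performed on $[R,LR]$.
\item In the unbounded case, refine the tail partition of Lemma~\ref{lem:tail-partition} so that each $J_i\subseteq[s_i,Ls_i]$ with $s_i>A$. This is easy because $\rho$ may be taken small relative to $A$.
\end{enumerate}
We also drop the exceptional set $J\setminus\Omega$ in Lemma~\ref{lem:local}. Concretely, we work with $(\Omega,d)$ and its completion; since $\eta(\Omega)=1$, neither the measure nor the distance law changes.

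\textbf{The ultrametric.} Every pair of distinct points has a well-defined first-separation node in a rooted tree. The nodes are: the components and their order positions (level $n$ in Theorem~\ref{thm:compact}, index $i$ in the general case), followed inside each component by the nested partitions $\mathcal P_n^\Omega$. To each node we assign the lower endpoint of its distance interval, namely $R_n$, $s_i$ or $a_n$. Along any root-to-leaf path these labels are non-increasing: internal scales of a level-$n$ component are at most $\diam X_{n,i}<R_n$, and in the unbounded case the labels of earlier components and of the core are smaller than those of later components. With this monotonicity, $u(x,y):=$ the label of the first-separation node is an ultrametric; this is the standard tree-ultrametric argument, the only care being the ordered-component case where the separation of $x_k,x_j$ occurs at the earlier index. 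By construction $u\le d\le Lu$ on the dense subsets. Since $d$ and $u$ are bi-Lipschitz equivalent there, $u$ is uniformly continuous for $d$ and extends to the completion, and both the ultrametric inequality and the two-sided bound survive passage to limits.

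\textbf{Topology.} Finally, replace $S$ by $\supp\mu$. This set is closed, hence complete, carries the same distance law, and inherits the bounds. It has no isolated points: an isolated point would be an atom of $\mu$, and then $\Theta(\{0\})\ge\mu(\{x\})^2>0$. It is zero-dimensional because it is bi-Lipschitz to an ultrametric space.
\begin{enumerate}[label=\textnormal{(\alph*)}]
\item In the bounded case $S$ is compact, so Brouwer's theorem gives that it is homeomorphic to the Cantor space. For the distance set: $d(S\times S)$ is compact and carries $\Theta$, so $\supp\Theta\subseteq d(S\times S)$. Conversely, full support of $\mu\otimes\mu$ on $S\times S$ and continuity of $d$ give $d(S\times S)\subseteq\supp\Theta$. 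Taking maxima yields $\diam S=\max\supp\Theta$.
\item In the unbounded case, a closed bounded set meets only finitely many components (since $\inf J_i\to\infty$), so $S$ is proper. Each component $X_0,X_1,\dots$ is clopen, and intersected with $\supp\mu$ it is a nonempty perfect compact zero-dimensional space, hence a Cantor space.
\end{enumerate}

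I expect the main obstacle to be step (1): re-verifying Lemma~\ref{lem:local} with ratio $L$ in place of $2$. One must check carefully that the refinement conditions \eqref{eq:local-refinement} still give $|d(x,y)-d(z,y)|\le a_m\le d(x,z)$, and that the labels $a_n$ nest correctly with the component scales, so that $u$ is genuinely an ultrametric and not merely a quasi-ultrametric.
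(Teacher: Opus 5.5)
Your proposal is correct, but it gets the multiplicative control differently from the paper.

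\textbf{The paper's route.} The paper keeps the dyadic scales, so the lemmas of Sections~\ref{sec:local-construction}--\ref{sec:general-case} are reused as proved. It only adds fineness conditions:
\begin{enumerate}[label=\textnormal{(\roman*)}]
\item $a_{n+1}\le(L-1)a_n$ in Lemma~\ref{lem:local}, so that $I_n\subseteq[r_n,Lr_n]$ with $r_n=\inf I_n$;
\item mesh $\min\{\rho,(L-1)R_n\}$ when splitting $[R_n,R_{n-1}]$;
\item mesh $\min\{\rho,(L-1)A\}$ in Lemma~\ref{lem:tail-partition}.
\end{enumerate}
Its labels are the left endpoints $r_n$, $h_{n,i}=\inf J_{n,i}$ and $h_i=\inf J_i$ of these thin intervals. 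Because the $h_{n,i}$ vary within a level, the paper must enumerate the level-$n$ intervals in decreasing order, so that the labels are nonincreasing along the component order.

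\textbf{Your route.} You instead compress the ratio of consecutive scales from $2$ to $L\le2$, so each whole window $[a_n,La_n]$ or $[R_n,LR_n]$ is already multiplicatively thin. The labels $a_n$ and $R_n$ are then constant across a level, and no reordering is needed. The cost is re-checking that every triangle inequality in Lemma~\ref{lem:local}, Lemma~\ref{lem:annular-amalgamation} and Theorem~\ref{thm:compact} only used that the ratio is at most $2$. This holds, including the case $m=n$ in Lemma~\ref{lem:local}, where $La_n\le2a_n$ suffices. In the unbounded case the two arguments essentially coincide. Dropping $J\setminus\Omega$ at the outset is a harmless shortcut for the paper's identification $\supp\mu=\overline{\iota(\Omega)}$.

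\textbf{Two small slips.}
\begin{enumerate}[label=\textnormal{(\alph*)}]
\item $s_1=\inf J_1$ can equal $A$, so you should write $s_i\ge A$. This is all you need, since $\diam X_0\le A$.
\item In the unbounded case the components form an ascending chain $X_0\subset X_0\sqcup X_1\subset\cdots$, not a tree with a labelled root. The ultrametric is cleanest checked directly from $u(x,y)=s_{\max\{i,j\}}$ for $x\in X_i$, $y\in X_j$, $i\ne j$, using that $(s_i)$ is nondecreasing.
\end{enumerate}
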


\begin{proof}
	Fix $L>1$.  We first refine the local construction.  Consider one
	application of Lemma~\ref{lem:local}, and retain the notation
	$a_n$, $I_n$, $\Omega$, and $N$ from its proof.  The sequence $(a_n)_{n\ge1}$ may additionally be chosen so that
	$a_{n+1}\le(L-1)a_n$ for every $n\ge1$.  Indeed, after $a_n$ has
	been chosen, apply the observation at the beginning of the proof of
	Lemma~\ref{lem:local} with $r=\min\{a_n/2,2(L-1)a_n\}$.
	The resulting number $a_{n+1}$ satisfies
	$[a_{n+1},2a_{n+1}]\subseteq(0,r]$.  Hence
	$2a_{n+1}<a_n$ and $a_{n+1}\le(L-1)a_n$, as required.
	
	Put $r_n=\inf I_n$.  For distinct $x,y\in\Omega$, define
	$u(x,y)=r_{N(x,y)}$, and put $u(x,x)=0$.  The ultrametric
	inequality is immediate if two of the three points coincide.  For
	three distinct points $x,y,z\in\Omega$, the nested partitions give $N(x,z)\ge\min\{N(x,y),N(y,z)\}$.
	Moreover, $r_{n+1}\le2a_{n+1}<a_n\le r_n$, and hence
	$u$ is an ultrametric. If $N(x,y)=n$, then $d(x,y)\in I_n$.
	Since $\diam I_n\le a_{n+1}$ and $r_n\ge a_n$,
	\[
	u(x,y)=r_n\le d(x,y)
	\le r_n+a_{n+1}\le Lr_n=L u(x,y).
	\]
	Since $u\le d\le Lu$ on $\Omega$, the identity map between
	$(\Omega,d)$ and $(\Omega,u)$ extends to a bi-Lipschitz
	identification of their completions, and the same inequalities hold
	on the completions.
	
	Let $(X,d_X)$, $\iota:J\to X$, and $\mu=\iota_\#\eta$ be as in the
	proof of Corollary~\ref{cor:compact-block}.  We claim that $\supp\mu=\overline{\iota(\Omega)}$,
	where the closure is taken in $X$.  For $x\in\Omega$, let $C_n(x)$
	be the cell of $\mathcal P_n^\Omega$ containing $x$.  This cell has
	positive $\eta$-measure.  Moreover, any two points of $C_n(x)$ have
	first separation level at least $n+1$, and hence
	\[
	\diam_d C_n(x)\le2a_{n+1}.
	\]
	Since $a_n\to0$ as $n\to\infty$, every $d_X$-neighbourhood of $\iota(x)$ has positive
	$\mu$-measure.  Thus $\iota(\Omega)\subseteq\supp\mu$.  Since $\supp\mu$ is closed,
	$\overline{\iota(\Omega)}\subseteq\supp\mu$.  Conversely, if
	$z\notin\overline{\iota(\Omega)}$, there is an open neighbourhood $G$
	of $z$ disjoint from $\iota(\Omega)$.  Then
	$\iota^{-1}(G)\subseteq J\setminus\Omega$, and therefore
	$\mu(G)=\eta(\iota^{-1}(G))=0$.  Hence $z\notin\supp\mu$, which proves
	the claim.
	
	Consequently, the completion of $(\Omega,d)$ is precisely
	$\supp\mu$.  The ultrametric constructed on $\Omega$ therefore
	extends to $\supp\mu$, where it still satisfies
	\[
	u(x,y)\le d_X(x,y)\le Lu(x,y).
	\]
	Thus every compact component used below may be restricted to the
	support of its measure and equipped with an ultrametric satisfying
	the same comparison.
	
	Next, we repeat the proof of Theorem~\ref{thm:compact}, retaining its
	notation.  In each application of
	Lemma~\ref{lem:annulus-transfer} on $[R_n,R_{n-1}]$, partition this
	interval into intervals of length at most
	$\min\{\rho,(L-1)R_n\}$, where $\rho$ is the constant used in that
	application.  After the subdivisions by mass, denote the resulting
	closed intervals by $J_{n,1},\ldots,J_{n,N_n}$ and enumerate them in
	decreasing order.  Put $h_{n,i}=\inf J_{n,i}$.  Then
	\[
	h_{n,i}\le t\le Lh_{n,i},
	\qquad t\in J_{n,i}.
	\]
	
	Let $u_{n,i}$ be the ultrametric within $X_{n,i}$ obtained above.
	Retain these ultrametrics within the components and put
	$u(\omega,\omega)=0$.  If $x\in X_{n,i}$ and $y$ belongs to a later
	component or equals $\omega$, set
	$u(x,y)=u(y,x)=h_{n,i}$.  The numbers $h_{n,i}$ are
	nonincreasing in the order of the components, and
	$\diam_{u_{n,i}}X_{n,i}<R_n\le h_{n,i}$.  Hence $u$ is an
	ultrametric on the assembled space, and $u\le d\le Lu$.
	
	Replace this space by the support $S$ of its measure.  This does not
	change the distance law.  The measure is atomless and has full
	support on $S$, so $S$ has no isolated points.  The metrics $d$ and
	$u$ induce the same topology, which is zero-dimensional.  In the
	bounded case, $S$ is compact and hence homeomorphic to the Cantor
	space.
	
	If $S$ is compact and $\mu$ has full support, then every
	$d(x,y)$, $x,y\in S$, belongs to the support of the distance law.
	Indeed, continuity of $d$ and full support give positive measure to
	every neighbourhood of this value.  The converse inclusion follows
	because $\{d(x,y):x,y\in S\}$ is compact.  Therefore
	\[
	\{d(x,y):x,y\in S\}=\supp\Theta,
	\]
	which also gives $\diam S=\max\supp\Theta$.
	
	For the unbounded case, repeat the proof of
	Theorem~\ref{thm:main}.  Choose $A>0$, put
	$\zeta=\Theta|_{[0,A]}$, and let $\rho$ be obtained by applying
	Lemma~\ref{lem:local} to $\zeta$ with $\varepsilon=A$.  Apply
	Lemma~\ref{lem:tail-partition} with
	$\rho_L=\min\{\rho,(L-1)A\}$.  Retain the notation $J_i$ and $X_i$
	from that proof, and put $h_i=\inf J_i$.  Then
	$\sup J_i\le Lh_i$, and $(h_i)_{i\ge1}$ is nondecreasing.
	
	Equip the compact core $X_0$ and every $X_i$, $i\ge1$, with the
	ultrametrics obtained above.  If $x\in X_i$ and $y$ belongs to an
	earlier component, set $u(x,y)=u(y,x)=h_i$.  The ordering shows that
	$u$ is an ultrametric and that $u\le d\le Lu$.
	
	After restriction to the measure supports, every component is
	homeomorphic to the Cantor space.  Since the components are open and
	closed, their union is homeomorphic to a countable disjoint union of
	Cantor spaces.  If $x_0\in X_0$, then
	$d(x,x_0)\ge\inf J_i\to\infty$ for $x\in X_i$.  Hence every bounded
	subset meets only finitely many compact components.  Every closed
	bounded subset is therefore compact, so $S$ is proper.
\end{proof}

The restriction $L>1$ cannot be replaced by $L=1$.  As noted in the
introduction, a separable ultrametric space has only countably many
positive distance values.  Its distance law is
therefore supported on a countable set and cannot be atomless.  Thus
the realising metric may be chosen arbitrarily close to an ultrametric
in the bi-Lipschitz sense, but cannot itself be an ultrametric.

\medspace

The results and the constructions in the present paper leave several natural questions for future research.

\begin{enumerate}[label=\textnormal{(\roman*)}]
	\item Which probability measures with atoms are feasible?  The support
	condition is not sufficient in general
	\cite[Proposition~2.2]{AldousBlancCurien2025}.
	
	\item Which atomless laws admit a realisation on a connected or
	geodesic metric space?  If $\mu$ has full support and $S$ is
	connected, then $\supp\Theta$ must be an interval, since it is the
	closure of the continuous image $d(S\times S)$.
	
	\item Which atomless laws can be realised by distances in a Hilbert
	space?  This is the setting of the original question in
	\cite{AldousMO2022}.
	
	\item For a fixed $n\geq3$, which probability measures on
	$\Rp^{\binom{n}{2}}$ can occur as the joint law of $\bigl(d(X_i,X_j)\bigr)_{1\leq i<j\leq n}$
	for independent points $X_1,\ldots,X_n$ with common law?  Such laws
	must satisfy the triangle inequalities, be invariant under
	relabelling the points, and admit compatible extensions to larger
	sample sizes.  The case $n=3$ was proposed in
	\cite{AldousBlancCurien2025}.
	
	\item The spaces constructed above are zero-dimensional in the
	topological sense, but our argument does not give any bound on
	their Hausdorff dimension. It is therefore natural to ask under which assumptions on $\Theta$ the realising space can be chosen of finite Hausdorff dimension.
\end{enumerate}

	\section*{Acknowledgement}
	
The authors have been supported by the DFG project \textit{Limit theorems for the volume of random projections of $\ell_p$-balls} (project number 516672205). CT was also supported by the DFG Priority Program SPP 2265 \textit{Random Geometric Systems}. 

During the preparation of this work, the authors used Claude (Anthropic)
and ChatGPT (OpenAI) to assist in exploring and checking mathematical
arguments and in editing the manuscript.  All mathematical statements
and proofs were independently verified and finalised by the authors.

	\bigskip
	
	\noindent
	Christoph Th\"ale and Philipp Tuchel\\
	Faculty of Mathematics, Ruhr University Bochum, Germany\\
	Email addresses: \texttt{christoph.thaele@rub.de},
	\texttt{philipp.tuchel@rub.de}
	

\begin{thebibliography}{99}
		
		\bibitem{AldousMO2022}
		D.~J. Aldous,
		\newblock Distributions of distance between two random points in Hilbert
		space,
		\newblock MathOverflow, Question 428539 (2022),
		\newblock \url{https://mathoverflow.net/q/428539}.
		
		\bibitem{AldousBlancCurien2025}
		D.~J. Aldous, G.~Blanc, and N.~Curien,
		\newblock The distance problem on measured metric spaces,
		\newblock \emph{Ann. Fac. Sci. Toulouse Math.} \textbf{34} (2025),
		1345--1365,
		\newblock \url{https://doi.org/10.5802/afst.1835}.
		
		\bibitem{BuragoBuragoIvanov2001}
		D.~Burago, Y.~Burago, and S.~Ivanov,
		\newblock \emph{A Course in Metric Geometry},
		\newblock Graduate Studies in Mathematics, vol.~33,
		American Mathematical Society, Providence, RI, 2001,
		\newblock \url{https://doi.org/10.1090/gsm/033}.
		
		\bibitem{DovgosheyShcherbak2022}
		O.~Dovgoshey and V.~Shcherbak,
		\newblock The range of ultrametrics, compactness, and separability,
		\newblock \emph{Topology Appl.} \textbf{305} (2022), Article 107899.
		\newblock \url{https://doi.org/10.1016/j.topol.2021.107899}.
		
		\bibitem{Fremlin2001}
		D.~H. Fremlin,
		\newblock \emph{Measure Theory. Volume 2: Broad Foundations},
		\newblock Torres Fremlin, Colchester, 2001,
		\newblock \url{https://www1.essex.ac.uk/maths/people/fremlin/mt.htm}.
		
		\bibitem{GrevenPfaffelhuberWinter2009}
		A.~Greven, P.~Pfaffelhuber, and A.~Winter,
		\newblock Convergence in distribution of random metric measure spaces
		($\Lambda$-coalescent measure trees),
		\newblock \emph{Probab. Theory Related Fields} \textbf{145} (2009),
		285--322,
		\newblock \url{https://doi.org/10.1007/s00440-008-0169-3}.
		
		\bibitem{Gromov1999}
		M.~Gromov,
		\newblock \emph{Metric Structures for Riemannian and Non-Riemannian Spaces},
		\newblock Progress in Mathematics, vol.~152, Birkh\"auser, Boston, 1999,
		\newblock \url{https://www.ihes.fr/~gromov/distancegeometry/112/}.
		
		\bibitem{Gross2025}
		R.~Gross,
		\newblock The distance problem via subadditivity,
		\newblock \emph{Ann. Henri Lebesgue} \textbf{8} (2025), 1023--1035,
		\newblock \url{https://doi.org/10.5802/ahl.254}.
		
		\bibitem{Kechris1995}
		A.~S. Kechris,
		\newblock \emph{Classical Descriptive Set Theory},
		\newblock Graduate Texts in Mathematics, vol.~156,
		Springer, New York, 1995,
		\newblock \url{https://doi.org/10.1007/978-1-4612-4190-4}.
		
		\bibitem{Kondo2005}
		T.~Kondo,
		\newblock Probability distribution of metric measure spaces,
		\newblock \emph{Differential Geom. Appl.} \textbf{22} (2005), 121--130,
		\newblock \url{https://doi.org/10.1016/j.difgeo.2004.10.001}.
		
		\bibitem{MemoliNeedham2022}
		F.~M\'emoli and T.~Needham,
		\newblock Distance distributions and inverse problems for metric measure
		spaces,
		\newblock \emph{Stud. Appl. Math.} \textbf{149} (2022), 943--1001,
		\newblock \url{https://doi.org/10.1111/sapm.12526}.
		
		\bibitem{Vershik1998}
		A.~M. Vershik,
		\newblock The universal Urysohn space, Gromov metric triples and random
		metrics on the natural numbers,
		\newblock \emph{Russian Math. Surveys} \textbf{53} (1998), 921--928,
		\newblock \url{https://doi.org/10.1070/RM1998v053n05ABEH000069}.
		
		\bibitem{Vershik2004}
		A.~M. Vershik,
		\newblock Random and universal metric spaces,
		\newblock in \emph{Dynamics and Randomness II}, Nonlinear Phenomena and
		Complex Systems, vol.~10, Springer, Dordrecht, 2004, pp.~199--228,
		\newblock \url{https://doi.org/10.1007/978-1-4020-2469-6_6}.
		
	\end{thebibliography}
\end{document}